\newenvironment{proof}{\noindent {\bf Proof:}}{\hfill $\Box$}
\newtheorem{lem}{Lemma}
\newtheorem{prop}{Proposition}
\newtheorem{rem}{Remark}
\newtheorem{ex}{Example}
\def\e{{\rm e}}
\def\x{\mathbf{x}}
\def\R{\mathbb{R}}
\def\N{\mathbb{N}}
\def\M{\mathbf{M}}
\def\A{\mathbf{A}}
\def\f{\mathbf{f}}
\def\z{\mathbf{z}}
\def\y{\mathbf{y}}
\def\y{\mathbf{y}}
\def\q{\mathbf{q}}
\def\b{\mathcal{B}}
\def\b{\mathbf{b}}
\def\q{\mathbf{q}}
\def\u{\mathbf{u}}
\def\e{\mathbf{e}}
\title
{Mean squared error minimization\\ for inverse moment problems\footnote{D. Henrion acknowledges support by project number 103/10/0628
of the Grant Agency of the Czech Republic. The major part of this work was carried out during
M. Mevissen's stay at LAAS-CNRS, supported by a fellowship within the Postdoctoral Programme
of the German Academic Exchange Service.}}
\begin{document}

\author{Didier Henrion$^{1,2,3}$, Jean B. Lasserre$^{1,2,4}$, Martin Mevissen$^5$}

\footnotetext[1]{CNRS, LAAS, 7 avenue du colonel Roche, F-31400 Toulouse, France.}
\footnotetext[2]{Univ. de Toulouse, LAAS, F-31400 Toulouse, France.}
\footnotetext[3]{Faculty of Electrical Engineering, Czech Technical University in Prague,
Technick\'a 2, CZ-16626 Prague, Czech Republic.}
\footnotetext[4]{Institut de Math\'ematiques de Toulouse, Univ. de Toulouse, UPS, F-31400 Toulouse, France.}
\footnotetext[5]{IBM Research - Ireland, Dublin Technology Campus, Damastown Ind. Park, Mulhuddart, Dublin 15, Ireland.}

\maketitle

\begin{abstract}
We consider the problem of approximating the unknown density $u\in L^2(\Omega,\lambda)$ of a measure $\mu$
on $\Omega\subset\R^n$, absolutely continuous with respect to some given reference measure $\lambda$, from
the only knowledge of finitely many moments of $\mu$. 
Given $d\in\N$ and moments of order $d$, we provide a
polynomial $p_d$ which minimizes the 
mean square error $\int (u-p)^2d\lambda$ over all polynomials $p$ of degree at most $d$.
If there is no additional requirement, $p_d$ is obtained as solution of a linear system.
In addition, if $p_d$ is expressed in the basis of polynomials that
are orthonormal with respect to $\lambda$, its vector of coefficients is just the vector of given moments
and no computation is needed.  Moreover $p_d\to u$ in $L^2(\Omega,\lambda)$ as $d\to\infty$. 
In general nonnegativity of $p_d$ is not guaranteed even though $u$ is nonnegative.
However, with this additional nonnegativity requirement 
one obtains analogous results but computing $p_d\geq0$ that minimizes $\int (u-p)^2d\lambda$
now requires solving an appropriate semidefinite program. We have tested the approach
on some applications arising from the reconstruction of geometrical objects and
the approximation of solutions of nonlinear differential equations.
In all cases our results are significantly better than those obtained with 
the maximum entropy technique for estimating $u$.
\end{abstract}

{\bf Keywords:}
Moment problems; density estimation; inverse problems; semidefinite programming.

\maketitle

\section{Introduction}

Estimating the density $u$ of an unknown measure $\mu$ is a well-known problem in statistical analysis, physics or engineering. 
In a statistical context, one is usually given observations in the form of a sample of independent or dependent identically distributed random variables obtained from the unknown measure $\mu$. And so there has been extensive research on estimating the density based on these observations. 
For instance, in one of the most popular approaches, the kernel density estimation \cite{parzen}, the density $u$ is estimated via
a linear combination of kernel functions - each of them being identified with exactly one observation. The crucial step in this method is to choose an appropriate bandwidth for which minimizing the integrated or the mean-integrated squared error between $u$ and its estimate is most common. Another very popular approach uses wavelets \cite{kerkyacharian,donoho, vannucci}, an example of approximating a density by a truncated orthonormal series.
The coefficients in the truncated wavelet expansion are moment estimates derived from the given identically distributed observations.
Again, the approximation accuracy is often measured by the mean-integrated squared error and depends on the number of observations
and the degree of the truncation. 
This approach provides a global density estimate satisfying both good local and periodic approximation properties.
For further details the interested reader is referred to \cite{eloyan, izenman, vannucci} and the many references therein.

In another context - arising in challenging fields such as image recognition, solving nonlinear differential equations, spectral estimation or speech processing - no direct observation is available, but rather finitely many moments of the unknown measure $\mu$ are given. Then the issue 
is to  reconstruct or approximate the density $u$ based on the only knowledge of finitely many moments,
(say up to order $d\in\N$), an {\it inverse} problem from moments. 
A simple method due to \cite{teague} approximates the density $u$ by a polynomial $p$ of degree at most $d$, so that
the moments of the measure $pd\lambda$ matches those of $\mu$, up to order $d$.
However, and in contrast with more sophisticated approaches, the resulting polynomial approximation $p$ is not guaranteed to be a density 
(even though $u$ is) as it 
may takes negative values on the domain of integration. One classical approach to the moment problem is the Pad\'e approximation \cite{pade}
which is based on approximating the measure by a (finite) linear combination of Dirac measures. The Dirac measures and their weights in the decomposition are determined by solving a nonlinear system of equations. In the maximum entropy estimation (another classical approach)
one selects the best approximation of $u$ by maximizing some functional entropy, the most popular 
being the Boltzmann-Shannon entropy. 
In general some type of weak convergence takes place as the degree increases as detailed in \cite{borwein}. Alternatively the norm of the approximate density is chosen as an objective function \cite{bertero, talenti, jones, goodrich}, which allows to show a stronger convergence in norm. In \cite{mead}, maximum entropy and Pad\'e approximates have been compared on some numerical experiments. Finally, piecewise polynomial spline based approaches have also been proposed in \cite{john}.

{\bf Motivation.} Our main motivation to study the (inverse) moment problem arises in the context of the so-called generalized problem of moments (GPM).
The abstract GPM is a infinite-dimensional linear program on some space of Borel measures on $\R^n$ and its applications seem endless, see e.g. \cite{landau,lasserre}
and the many references therein. For instance, to cite a few applications, the GPM framework can be used to help solve
a weak formulation of some ordinary or partial differential equations, as well as some calculus of variations and optimal control problems.
The solution $u$ of the original problem (or an appropriate translate) is interpreted as a density with respect to the Lebesgue measure
$\lambda$ on some domain and one computes (or approximates) finitely many moments of the measure $d\mu:=ud\lambda$ by solving an appropriate 
finite-dimensional optimization problem. But then to recover an approximate solution of the original problem
one has to solve an inverse problem from moments. This approach is particularly attractive when the data of the
original problem consist of polynomials and basic semi-algebraic sets. In this case
one may define a hierarchy (as the number of moments increases) of 
so-called semidefinite programs to compute approximations of increasing quality.

\subsection*{Contribution}

In this paper we consider the following inverse problem from moments: Let $\mu$ be a finite Borel measure 
absolutely continuous with respect to some reference measure $\lambda$ on 
a box $\Omega$ of $\R^n$ and whose density $u$ is assumed to be in $L^2(\Omega,\lambda)$, with no continuity
assumption as in previous works.
The ultimate goal is to compute an {\it approximation} $u_d$ of $u$, based on the only knowledge of 
finitely many moments (say up to order $d$) of $\mu$. In addition, for consistency, it would be highly desirable 
to also obtain some ``convergence" $u_d\to u$ as $d\to\infty$.

(a) Firstly, we approximate the density $u$ by a polynomial $u^*_d$ of degree $d$
which minimizes the mean squared error 
$\int_\Omega (u-p)^2d\lambda$ 
(or equivalently the $L^2(\Omega,\lambda)$-norm $\Vert u-p\Vert_2^2$) over all polynomials $p$
of degree at most $d$.
We show that an unconstrained $L^2$-norm minimizer $u^*_d$ exists, is unique, and coincides with the simple polynomial approximation due to \cite{teague}; it can be determined by solving a system of linear equations. It turns out that $u^*_d$
matches all moments up to degree $d$, and it is even 
easier to compute if it is expressed in the basis of polynomials that are orthonormal with respect to $\lambda$. No inversion is needed
and the coefficients of $u^*_d$ in such a basis are just the given moments.
Moreover we show that $u^*_d\to u$ in $L^2(\Omega,\lambda)$  as $d\to\infty$, which is the best we can hope for
in general since there is no continuity assumption on $u$; in particular
$u^*_{d_k}\to u$ almost-everywhere and almost-uniformly on $\Omega$ for some subsequence $(d_k)$, $k\in\N$. 
Even though both proofs are rather straightforward,
to the best of our knowledge it has not been pointed out before that not only this mean squared error estimate $u^*_d$
is much easier to compute than the corresponding maximum entropy estimate, but 
it also converges to $u$ as $d\to\infty$ in a much stronger sense.
For the {\it univariate case}, in references \cite{provost} and \cite{athanassoulis} the authors address
the problem of approximating a {\it continuous} density on a compact interval by polynomials or kernel density
functions that match a fixed number of moments.
In this case, convergence in supremum norm is obtained when the number of moments increases.
An extension to the noncompact (Stieltjes) case is carried out in \cite{gavriliadis}. 
Notice that
in \cite{provost} it was already observed that the resulting polynomial approximation
also minimizes the mean square error and its coefficients solve
a linear system of equations.
In \cite{bertero, talenti} the minimum-norm solution (and 
not the minimum distance solution) is shown to be unique solution of a system of linear equations. In \cite{jones} the minimal distance solution is considered but it is obtained as the solution of a constrained optimization problem and requires an initial guess for the density estimate.

(b) However, as already mentioned and unlike the maximum entropy estimate, the above 
unconstrained $L^2$-norm minimizer $u^*_d$ may {\it not} be a density as it may take
negative values on $\Omega$. Of course, the nonnegative function $u^*_{d+}:=\max[0,u^*_d]$
also converges to $u$ in $L^2$ but it is not a polynomial anymore. 
So we next propose to obtain a {\it nonnegative polynomial} approximation $u^*_d$ by minimizing the same $L^2$-norm criterion but now under the 
additional constraint that the candidate polynomial approximations 
should be nonnegative on $\Omega$. In principle such a constraint is difficult to handle which probably explains why 
it has been ignored in previous works. Fortunately, if $\Omega$ is a compact basic semi-algebraic set one is able
to enforce this positivity constraint by using Putinar's Positivstellensatz \cite{putinar} which provides a nice positivity certificate for polynomials strictly positive on $\Omega$.
Importantly, the resulting optimization problem is convex and even more, a semidefinite programming (SDP) problem which can be solved 
efficiently by public domain solvers based on interior-point algorithms. Moreover, again as in the unconstrained case, we prove the 
convergence $u^*_d\to u$ in $L^2(\Omega,\lambda)$ as $d\to\infty$ (and so almost-everywhere and almost-uniform convergence on $\Omega$ as well
for some subsequence $(u^*_{d_k})$, $k\in\N$)
which is far stronger than the weak convergence obtained for the maximum entropy estimate. Notice, in \cite{teague, provost} methods for obtaining some non-negative estimates are discussed, however these estimates do not satisfy the same properties in terms of mean-square error minimization and convergence as in the unconstrained case. In the kernel density element method \cite{athanassoulis, gavriliadis} a nonnegative density estimate for the univariate case is obtained by solving a constrained convex quadratic optimization problem. However, requiring each coefficient in the representation to be nonnegative as presented there seems more restrictive than the nonnegative polynomial approximation proposed in this paper.

(c) Our approach is illustrated on some challenging applications. In the first set of problems we are concerned with recovering the shape of geometrical objects whereas in the second set of problems we approximate solutions of nonlinear differential equations. Moreover, we demonstrate the potential of this approach for approximating densities with jump discontinuities, which is harder to achieve than for the smooth, univariate functions discussed in \cite{provost}. The resulting $L^2$-approximations 
clearly outperform the maximum entropy estimates with respect to both running time and pointwise approximation accuracy. Moreover, our approach is able to handle sets $\Omega$ more complicated than a box (as long as the moments of the measure $ud\lambda$ are available) 
as support for the unknown density, whereas such sets are a challenge for computing maximum entropy estimates because
integrals of a combination of polynomials and exponentials of polynomials must be computed repeatedly.

\subsection*{Outline of the paper}

In Section \ref{notations} we introduce the notation and we state the problem to be solved.
In Section \ref{l2} we present our approach to approximate an unknown density $u$ by a polynomial $u^*_d$ of degree at most $d$
via unconstrained and constrained $L^2$-norm minimization, respectively; in both cases we also prove the convergence $u^*_d\to u$ 
in $L^2(\Omega,\lambda)$ (and almost-uniform convergence on $\Omega$ as well for some subsequence) 
as $d$ increases. In Section \ref{secNumEx} we illustrate the approach on a number of examples - most notably from recovering geometric objects and approximating solutions of nonlinear differential equations - and highlight its advantages when compared with the
maximum entropy estimation. Finally, we discuss methods to improve the stability of our approach by considering orthogonal bases for the functions spaces we use to approximate the density. And we discuss the limits of approximating discontinuous functions by smooth functions in connection with the well-known Gibbs effect.

\section{Notation, definitions and preliminaries}\label{notations}

\subsection{Notation and definitions}
Let $\R[\x]$ (resp. $\R[\x]_d$) denote the ring of real polynomials in the variables $\x=(x_1,\ldots,x_n)$ (resp. polynomials of degree at most $d$), whereas $\Sigma[\x]$ (resp. $\Sigma[\x]_d$) denotes 
its subset of sums of squares (SOS) polynomials (resp. SOS of degree at most $2d$).

With $\Omega\subset\R^n$ and a given reference measure $\lambda$ on $\Omega$,
let $L^2(\Omega,\lambda)$ 
be the space of functions on $\Omega$ whose square is $\lambda$-integrable and let
$L^2_+(\Omega,\lambda)\subset L^2(\Omega,\lambda)$ be the convex cone of nonnegative elements.
Let $C(\Omega)$ (resp. $C_+(\Omega)$) be the space of continuous functions 
(resp. continuous nonnegative functions) on $\Omega$.
Let $P(\Omega)$ be the space of polynomials nonnegative on $\Omega$.

For every
$\alpha\in\N^n$ the notation $\x^\alpha$ stands for the monomial $x_1^{\alpha_1}\cdots x_n^{\alpha_n}$ and for every $d\in\N$, let $\N^n_d:=\{\alpha\in\N^{n}:\sum_j\alpha_j\leq d\}$ whose cardinal is $s(d)={n+d\choose d}$.
A polynomial $f\in\R[\x]$ is written 
\[\x\mapsto f(\x)\,=\,\sum_{\alpha\in\N^{n}}\,f_{\alpha}\,\x^\alpha\]
and $f$ can be identified with its vector of coefficients $\f=(f_{\alpha})$ in the canonical basis $(\x^\alpha)$, $\alpha\in\N^n$. 
Denote by ${\mathbb S}^n$ the space of real $n\times n$ symmetric matrices, and by ${\mathbb S}^n_+$ the cone of positive semidefinite
elements of ${\mathbb S}^n$. For any $\A\in{\mathbb S}^n_+$ the notation $\A\succeq0$ stands for positive semidefinite. 
A real sequence $\y=(y_{\alpha})$, $\alpha\in\N^{n}$, has a representing measure if
there exists some finite Borel measure $\mu$ on $\R^{n}$ such that
\[y_{\alpha}\,=\,\int\x^\alpha\,d\mu(\x),\qquad\forall\,\alpha\in\N^{n}.\]

\subsection*{Linear functional}
Given a real sequence $\y=(y_{\alpha})$ define the Riesz linear functional $L_\y:\R[\x]\to\R$ by:
\[f\:(=\sum_{\alpha} f_{\alpha}\x^\alpha)\quad\mapsto L_\y(f)\,=\,\sum_{\alpha}f_{\alpha}\,y_{\alpha},\qquad f\in\R[\x].\]
\subsection*{Moment matrix}
Given $d \in {\mathbb N}$,
the moment matrix of order $d$ associated with a sequence
$\y=(y_{\alpha})$, $\alpha\in\N^{n}$, is the real symmetric matrix $\M_d(\y)$ with rows and columns indexed by $\N^{n}_d$, and whose entry $(\alpha,\beta)$ is $y_{\alpha+\beta}$, for every $\alpha,\beta\in\N^{n}_d$. 
If $\y$ has a representing measure $\mu$ then
$\M_d(\y)\succeq0$ because
\[\langle\f,\M_d(\y)\f\rangle\,=\,\int f^2\,d\mu\,\geq0,\qquad\forall \,\f\,\in\R^{s(d)}.\]

\subsection*{Localizing matrix}
With $\y$ as above and $g\in\R[\x]$ (with $g(\x)=\sum_{\gamma} g_{\gamma}\x^\gamma$), the localizing matrix of order $d$ associated with $\y$ 
and $g$ is the real symmetric matrix $\M_d(g\,\y)$ with rows and columns indexed by $\N^n_d$, and whose entry $(\alpha,\beta)$ is $\sum_{\gamma}g_{\gamma} y_{(\alpha+\beta+\gamma)}$, for every 
$\alpha,\beta\in\N^{n}_d$.
If $\y$ has a representing measure $\mu$ whose support is 
contained in the set $\{\x\,:\,g(\x)\geq0\}$ then
$\M_d(g\,\y)\succeq0$ because
\[\langle\f,\M_d(g\,\y)\f\rangle\,=\,\int f^2\,g\,d\mu\,\geq0,\qquad\forall \,\f\,\in\R^{s(d)}.\]

\subsection{Problem statement}
\label{secL2Mapp}

We consider the following setting. For $\Omega\subset\R^n$ compact, let $\mu$ and $\lambda$ be $\sigma$-finite Borel measures supported on $\Omega$. Assume that the moments of $\lambda$ are known and $\mu$ is absolutely continuous with respect to $\lambda$ ($\mu\ll\lambda$)  with Radon-Nikod\'ym derivative (or density) $u:\, \Omega\rightarrow \R_{+}$, with respect to $\lambda$.  The 
density $u$ is unknown but we know finitely many moments $\y=(y_\alpha)$ of $\mu$, that is,
\begin{equation}
y_{\alpha} := \int_{\Omega} \x^{\alpha} u(\x) d\lambda (\x) = \int_{\Omega} \x^{\alpha} d\mu(\x), \qquad \forall \alpha\in\N^n_d,
\label{givenMoments}
\end{equation}
for some $d\in \N$. 

The issue is to find an estimate $u_d:\, \Omega\rightarrow \R_{+}$ for $u$, such that
\begin{equation}
\int_{\Omega} \x^{\alpha} u_d(\x) d\lambda (\x) = y_{\alpha}, \qquad \forall \alpha\in\N^n_d.
\label{momConditions}
\end{equation}

\subsection{Maximum entropy estimation}
\label{subSecMEE}
We briefly describe the maximum entropy method due to \cite{jaynes1, jaynes2, borwein} as a reference 
for later comparison with the mean squared error approach.

If one chooses the Boltzmann-Shannon entropy $H(u):=-u\log u$, the resulting estimate with maximum-entropy
is an optimal solution of the optimization problem
\[
\max_{u_d} \int_\Omega H(u_d)d\lambda \:\:\mathrm{s.t.}\:\: \int_\Omega \x^\alpha u_d(\x)d\lambda(\x)=y_\alpha,\quad\forall \alpha\in\N^n_d.
\]
It turns out that an optimal solution $u^*_d$ is of the form
\[
\x\mapsto u^*_d(\x):=\displaystyle\exp\left(\sum_{\mid\alpha\mid\leq d} u_{\alpha}\x^{\alpha}\right)
\]
for some vector $\u_d=(u_{\alpha})\in\R^{s(d)}$. Hence, computing an optimal solution $u^*_d$ reduces to solving the finite-dimensional convex 
optimization problem
\begin{equation}
\max_{\u_d\in\R^{s(d)}} \:\left\{\,\langle \y,\u_d\rangle - \int_{\Omega}\exp\left(\sum_{\mid\alpha\mid\leq d}u_{\alpha}\x^{\alpha}\right)\,d\lambda(\x)\right\}
\label{concaveOpt}
\end{equation}
where $\y=(y_\alpha)$ is the given moment information on the unknown density $u$.
If $(u^*_d)$, $d\in\N$, is a sequence of optimal solutions to (\ref{concaveOpt}), then following weak convergence occurs:
\begin{equation}
\label{weak}
\lim_{d\to\infty}\:\int_{\Omega} \psi(\x)u^*_d(\x)\, d\lambda(\x)\,=\, \int_{\Omega}\psi(\x)\,u(\x)\, d\lambda(\x),
\end{equation}
for all bounded measurable functions $\psi:\Omega\rightarrow\R$ continuous almost everywhere. 
For more details the interested reader is referred to \cite{borwein}.

Since the estimate $u^*_d$ is an exponential of a polynomial, it is guaranteed to be nonnegative on $\Omega$ and so
it is a density. However, even though the problem is convex it remains hard to solve because 
in first or second-order optimization algorithms, computing the gradient or Hessian at a current iterate $\u_d=(u_\alpha)$
requires evaluating integrals of the form
\[\int_\Omega \x^\alpha\displaystyle\exp\left(\sum_{\mid\alpha\mid\leq d} u_{\alpha}\x^{\alpha}\right)\,d\lambda(\x),\quad\alpha\in\N^n_d\]
which is a difficult task in general, except perhaps in small dimension $n=1$ or $2$.

\section{The mean squared error approach}\label{l2}

In this section we assume that the unknown density $u$ is an element
of $L^2(\Omega,\lambda)$, and we introduce our mean squared error, or $L^2$-norm approach, for density approximation.

\subsection{Density approximation as an unconstrained problem}
\label{unconSubSec}

We now restrict $u_d$ to be a polynomial, i.e.
\[
\x\mapsto\quad u_d(\x)\,:=\,\sum_{\mid\alpha\mid\leq d} u_{\alpha}\x^{\alpha}
\]
for some vector of coefficients $\u=(u_\alpha)\in\R^{s(d)}$.
We first show how to obtain a polynomial estimate $u^*_d\in\R [\x]_d$ of $u$ satisfying (\ref{momConditions})
by solving an unconstrained optimization problem.

Let $\z$ denote the sequence of moments of $\lambda$ on $\Omega$, i.e., $\z=(z_{\alpha})$, $\alpha\in\N^n$, with
\[
z_{\alpha} = \int_{\Omega} \x^{\alpha} d\lambda,\qquad \forall\,\alpha\in\N^n,
\]
and let $\M_d(\z)$ denote the moment matrix of order $d$ of $\lambda$.
This matrix is easily computed since the moments of $\lambda$ are known.

Consider the unconstrained optimization problem 
\begin{equation}
\displaystyle\min_{u_d\in\R[\x]_d} \: \displaystyle\Vert u-u_d\Vert_2^2 \:\left(= \int_{\Omega} (u - u_d)^2 d\lambda\right).
\label{unconOpt}
\end{equation}

\begin{prop}
\label{L2minProp}
Let $\Omega\subset\R^n$ have a nonempty interior and let $\lambda(O)>0$ for some open set $O\subset\Omega$.
A polynomial $u^*_d\in\R[\x]_d$ is  an optimal solution of problem (\ref{unconOpt}) if and only if
its vector of coefficients $\u^*_d\in\R^{s(d)}$ is an optimal solution of the unconstrained quadratic optimization problem
\begin{equation}
\label{unconQP}
\displaystyle \min_{\u_d\in\R^{s(d)}}\,\{\: \u^T_d \, \M_d(\z) \, \u_d - 2\,\u^T_d \y\,\}.
\end{equation}
Then $\u^*_d:=\M_d^{-1}(\z)\y$ is the unique solution of (\ref{unconQP}), and $u^*_d\in\R[\x]_d$ satisfies:
\begin{equation}
\label{eq-moments}
\int_\Omega \x^\alpha\,u^*_d\,d\lambda\,=\,y_\alpha\,=\,\int_\Omega \x^\alpha\,u\,d\lambda,\qquad\forall\,\alpha\in\N^n_d.\end{equation}
\end{prop}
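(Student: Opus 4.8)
The plan is to expand the least-squares objective and recognize it, up to an additive constant, as the quadratic form appearing in (\ref{unconQP}). Writing $u_d(\x)=\sum_{|\alpha|\le d}u_\alpha\x^\alpha$ and using that $\mu=u\,d\lambda$, together with the definitions of $\y$, $\z$ and $\M_d(\z)$, I would compute
\[
\int_\Omega(u-u_d)^2\,d\lambda=\int_\Omega u^2\,d\lambda-2\sum_{|\alpha|\le d}u_\alpha\underbrace{\int_\Omega\x^\alpha u\,d\lambda}_{=\,y_\alpha}+\sum_{|\alpha|,|\beta|\le d}u_\alpha u_\beta\underbrace{\int_\Omega\x^{\alpha+\beta}\,d\lambda}_{=\,z_{\alpha+\beta}},
\]
so that $\Vert u-u_d\Vert_2^2=\Vert u\Vert_2^2+\u_d^T\M_d(\z)\u_d-2\,\u_d^T\y$. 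Since $\Vert u\Vert_2^2$ does not depend on $u_d$ and both problems are unconstrained, the correspondence $u_d\leftrightarrow\u_d$ shows at once that $u_d\in\R[\x]_d$ minimizes (\ref{unconOpt}) if and only if its coefficient vector minimizes (\ref{unconQP}), which is the claimed equivalence.

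The key step, and the one I expect to be the main obstacle, is to upgrade the positive semidefiniteness of $\M_d(\z)$ to \emph{positive definiteness}, so that (\ref{unconQP}) is strictly convex with a unique minimizer. Semidefiniteness is already recorded in the preliminaries, since $\z$ admits $\lambda$ as representing measure. For definiteness I would argue by contradiction: if $\f^T\M_d(\z)\f=0$ for some $\f\neq0$, then with $f=\sum_\alpha f_\alpha\x^\alpha\in\R[\x]_d$ one gets $\int_\Omega f^2\,d\lambda=0$, hence $f=0$ $\lambda$-almost everywhere on $\Omega$. Since $f$ is continuous it must vanish on ${\rm supp}\,\lambda$, and the hypotheses that $\Omega$ has nonempty interior and that $\lambda(O)>0$ for some open set $O\subset\Omega$ are exactly what is needed to ensure that ${\rm supp}\,\lambda$ is large enough to force $f\equiv0$, contradicting $\f\neq0$. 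The delicate point is precisely this last implication: ruling out that a nontrivial polynomial of degree at most $d$ vanishes $\lambda$-almost everywhere, which is where the assumptions on $\Omega$ and $\lambda$ are genuinely used.

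With $\M_d(\z)\succ0$ the remaining steps are routine. The objective of (\ref{unconQP}) is a strictly convex quadratic, so its unique minimizer is characterized by the first-order (normal) equation $\M_d(\z)\u_d-\y=0$, giving $\u^*_d=\M_d^{-1}(\z)\y$. Finally, I would read the moment conditions (\ref{eq-moments}) directly off these normal equations: for every $\alpha\in\N^n_d$,
\[
\int_\Omega\x^\alpha\,u^*_d\,d\lambda=\sum_{|\beta|\le d}(\u^*_d)_\beta\,z_{\alpha+\beta}=(\M_d(\z)\,\u^*_d)_\alpha=y_\alpha,
\]
so that the optimal polynomial reproduces all given moments up to order $d$, completing the proof.
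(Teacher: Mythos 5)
Your proposal is correct and follows essentially the same route as the paper: expand the squared $L^2$ distance to identify it with the quadratic form of (\ref{unconQP}) plus the constant $\Vert u\Vert_2^2$, establish $\M_d(\z)\succ0$ by the contradiction argument that $\f^T\M_d(\z)\f=\int_\Omega f^2\,d\lambda=0$ forces the polynomial $f$ to vanish on the open set $O$ and hence identically, and then read the moment identities (\ref{eq-moments}) off the normal equations $\M_d(\z)\u^*_d=\y$. The only difference is cosmetic (the paper contracts the normal equations against the coordinate vectors $\e_\alpha$ where you expand the sum explicitly), so no further comment is needed.
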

\begin{proof}
Observe that for every $u_d\in\R[\x]_d$ with vector of coefficients $\u_d\in\R^{s(d)}$,
\begin{eqnarray*}
\int_{\Omega} \left(u -u_d\right)^2 d\lambda
 &=& \displaystyle\int_{\Omega} u^2_d\, d\lambda -2\,\int_{\Omega} \underbrace{u_d\,u\,d\lambda}_{u_dd\mu} + \int_{\Omega} u^2 d\lambda\\
 &=& \u^T_d\M_d(\z)\u_d - 2\,\u^T_d\y + \int_{\Omega} u^2\,d\lambda.
\end{eqnarray*}
The third term on the right handside being constant, it does not affect the optimization and can be ignored.
Thus, the first claim follows.

The second claims follows from the well-known optimality conditions for unconstrained, convex quadratic programs
and the fact that $\M_d(\z)$ is nonsingular because $\M_d(\z)\succ 0$ for all $d\in\N$. Indeed,
if $\q^T\M_d(\z)\q=0$ for some $0\neq\q\in\R^{s(d)}$ then necessarily the polynomial $q\in\R[\x]_d$
with coefficient vector $\q$ vanishes on the open set $O$, which implies that $q=0$,
in contradiction with $\q\neq0$.

Finally, let $\e_\alpha\in\R^{s(d)}$ be the vector of coefficients associated with the monomial $\x^\alpha$, $\alpha\in\N^n_d$.
from $\M_d(\z)\u^*_d=\y$ we deduce 
\[y_\alpha\,=\,\e_\alpha^T\M_d(\z)\u^*_d\,=\,\int_\Omega \x^\alpha u^*_d\,d\lambda\,\]
which is the desired result.
\end{proof}

Thus the polynomial $u^*_d\in\R[\x]_d$ minimizing the $L^2$-norm distance to $u$
coincides with the polynomial approximation due to \cite{teague} defined to be 
a polynomial which satisfies all conditions (\ref{momConditions}). 
Note that this is not the case anymore if one uses an $L^p$-norm distance with $p>2$.

Next, we obtain the following convergence result for the sequence of minimizers of problem (\ref{unconOpt}), $d\in\N$.

\begin{prop}
\label{propConv}
Let $\Omega$ be compact with nonempty interior and let $\lambda$ be finite with $\lambda(O)>0$
for some open set $O\subset\Omega$. Let $(u^*_d)$, $d\in\N$, be  the sequence of minimizers of problem (\ref{unconOpt}). Then
$\Vert u-u^*_d\Vert_2 \rightarrow 0$ as $d\to\infty$. In particular there is a subsequence $(d_k)$, $k\in\N$, such that
$u^*_{d_k}\to u$, $\lambda$-almost everywhere and $\lambda$-almost uniformly on $\Omega$, as $k\to\infty$.
\end{prop}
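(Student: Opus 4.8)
The plan is to prove the $L^2$-convergence by exploiting the fact that $u^*_d$ is not merely \emph{some} degree-$d$ polynomial, but the \emph{orthogonal projection} of $u$ onto $\R[\x]_d$ in the Hilbert space $L^2(\Omega,\lambda)$. Indeed, Proposition~\ref{L2minProp} tells us that $u^*_d$ minimizes $\Vert u - p\Vert_2$ over all $p\in\R[\x]_d$, so $u^*_d=\Pi_d u$ where $\Pi_d$ denotes orthogonal projection onto the (finite-dimensional, hence closed) subspace $\R[\x]_d\subset L^2(\Omega,\lambda)$. Since the spaces $\R[\x]_d$ are nested and increasing, the projection error $\Vert u - u^*_d\Vert_2$ is monotonically nonincreasing in $d$, and the whole question reduces to showing that the limit is $0$, i.e.\ that $\bigcup_d \R[\x]_d$ is dense in $L^2(\Omega,\lambda)$.

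For the density step I would argue as follows. Because $\Omega$ is compact, polynomials are dense in $C(\Omega)$ in the supremum norm by the Stone--Weierstrass theorem. Since $\lambda$ is a finite measure on the compact set $\Omega$, uniform convergence implies $L^2(\Omega,\lambda)$-convergence (with a constant $\lambda(\Omega)^{1/2}$), so polynomials are dense in $C(\Omega)$ for the $L^2$-norm as well. Finally, $C(\Omega)$ is itself dense in $L^2(\Omega,\lambda)$ for a finite Borel measure on a compact metric space. Chaining these two density statements gives that $\bigcup_d\R[\x]_d$ is dense in $L^2(\Omega,\lambda)$. Concretely: given $\varepsilon>0$ and $u\in L^2(\Omega,\lambda)$, first pick $g\in C(\Omega)$ with $\Vert u-g\Vert_2<\varepsilon/2$, then pick a polynomial $p$ with $\sup_\Omega|g-p|$ small enough that $\Vert g-p\Vert_2<\varepsilon/2$, whence $\Vert u-p\Vert_2<\varepsilon$. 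Since $p\in\R[\x]_d$ for some $d$ and $u^*_d$ is the best degree-$d$ approximation, we get $\Vert u-u^*_d\Vert_2\le\Vert u-p\Vert_2<\varepsilon$. Monotonicity then forces $\Vert u-u^*_{d'}\Vert_2<\varepsilon$ for all $d'\ge d$, establishing $\Vert u-u^*_d\Vert_2\to 0$.

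For the final sentence, the passage from norm convergence to pointwise and almost-uniform convergence along a subsequence is a standard measure-theoretic fact: any sequence converging in $L^2$ (hence in measure, since $\lambda$ is finite) admits a subsequence $(u^*_{d_k})$ converging $\lambda$-almost everywhere. Egorov's theorem, applicable because $\lambda(\Omega)<\infty$, then upgrades this to $\lambda$-almost uniform convergence on $\Omega$. I would simply cite these classical results rather than reprove them.

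The main obstacle, such as it is, is conceptual rather than technical: one must recognize that the minimization characterization from Proposition~\ref{L2minProp} is exactly an orthogonal projection statement, which makes both the monotonicity of the error and the reduction to a density question immediate. Once that identification is made, every remaining step is a citation of a standard theorem (Stone--Weierstrass, density of $C(\Omega)$ in $L^2$, the subsequence-a.e.\ principle, Egorov). No delicate estimate or novel argument is required; the only hypotheses that do real work are the compactness of $\Omega$ (for Stone--Weierstrass) and the finiteness of $\lambda$ (for the $C(\Omega)\hookrightarrow L^2$ inclusion and for Egorov). The hypothesis $\lambda(O)>0$ on some open set, needed for uniqueness in the previous proposition, plays no role here.
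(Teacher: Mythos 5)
Your proof is correct and follows essentially the same route as the paper's: density of polynomials in $L^2(\Omega,\lambda)$ (via Stone--Weierstrass and finiteness of $\lambda$) combined with the best-approximation property of $u^*_d$, then standard results for the almost-everywhere and almost-uniform convergence of a subsequence. You are in fact slightly more careful than the paper, which only compares $u$ with $u^*_{d_k}$ for $d_k=\deg v_k$ and leaves implicit the monotonicity of $\Vert u-u^*_d\Vert_2$ in $d$ that you spell out to get convergence of the full sequence.
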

\begin{proof}
Since $\Omega$ is compact, $\R [\x]$ is dense in $L^2(\Omega,\lambda)$. Hence as $u\in L^2(\Omega,\lambda)$ there exists a sequence 
$(v_k)\subset\R [\x]$, $k\in\N$, with $\Vert u-v_k\Vert_2\rightarrow 0$ as $k\to\infty$. Observe that
if $d_k=\deg\,v_k$ then as $u^*_{d_k}$ solves problem (\ref{unconOpt}), it holds that 
$\Vert u-v_k\Vert_2\,\geq\,\Vert u-u^*_{d_k}\Vert_2$ for all $k$,
which combined with $\Vert u-v_k\Vert_2\rightarrow 0$ yields the desired result. The last statement follows from 
\cite[Theorem 2.5.2 and 2.5.3]{ash}.
\end{proof}

Note that computing the $L^2$ norm minimizer $u^*_d$ is equivalent to solving a system of linear equations, whereas computing 
the maximum entropy estimate requires solving the potentially hard convex optimization problem (\ref{concaveOpt}).
Moreover, the $L^2$-convergence $\Vert u^*_d-u\Vert_2\to0$ in Proposition \ref{propConv} (and so almost-everywhere and 
almost-uniform convergence for a subsequence) 
is much stronger than the weak convergence
(\ref{weak}). On the other hand, unlike the maximum entropy estimate, the $L^2$-approximation $u^*_d\in\R[\x]_d$ is not
guaranteed to be nonnegative on $\Omega$, hence it is not necessarily a density.
Methods to overcome this shortcoming are discussed in \S \ref{conSubSec}.

\begin{rem}
\label{frameRmk}
{\rm In general the support $K:=\text{supp}\,\mu$ of $\mu$ may be a strict subset of $\Omega=\text{supp}\,\lambda$. In the case where
$K$ is not known or its geometry is complicated, one chooses a set $\Omega\supset K$ with a simple geometry so that moments of $\lambda$
are computed easily. As demonstrated on numerical examples in Section \ref{secNumEx}, choosing an enclosing frame 
$\Omega$ as tight as possible is crucial for reducing the pointwise approximation error of $u^*_d$ when the degree $d$ is fixed. In the maximum entropy
method of \S \ref{subSecMEE} no enclosing frame $\Omega\supset K$ is chosen. 
In the $L^2$-approach, choosing $\Omega\supset K$ is a degree of freedom 
that sometimes can be exploited for a fine tuning of the approximation accuracy.
}\end{rem}

\begin{rem}
{\rm From the beginning we have considered a setting where an exact truncated moment vector $\y$ of the 
unknown density $u\geq 0$ is given. However, usually one only has an approximate moment vector $\tilde{\y}$ 
and in fact, it may even happen that $\tilde{\y}$ is not the moment vector of a (nonnegative) measure. In the latter case, the maximum of the convex problem (\ref{concaveOpt}) is unbounded, whereas the $L^2$-norm approach  always yields a polynomial estimate $u^*_d\in\R[\x]_d$.
}\end{rem}

\begin{rem}
{\rm
If $\tilde{\y}$ is a slightly perturbed version of $\y$, the resulting numerical error in $u_d$ and side effects caused by ill conditioning may be reduced by considering the regularized problem
\begin{equation}
\label{unconOptRegular}
\displaystyle\min_{u_d}\: \int_{\Omega} (u-u_d)^2 d\lambda + \epsilon\Vert\u_d\Vert^2
\end{equation}
where $\Vert\u_d\Vert^2$ is the Euclidean norm of the coefficient vector $\u_d \in \R^{s(d)}$ of $u_d\in\R[\x]_d$, and $\epsilon>0$ (fixed)
is a regularization parameter approximately of the same order as the noise in $\tilde{\y}$. The coefficient vector 
of an optimal solution $u^*_d(\epsilon)$ of (\ref{unconOptRegular}) is then given by
\begin{equation}
\u^*_d(\epsilon) = \left( \M_d + \epsilon I\right)^{-1}\tilde{\y}.
\end{equation}
The effect of small perturbations in $\y$ on the pointwise approximation accuracy of $u^*_d$ for $u$ is demonstrated on some numerical examples in 
Section \ref{secNumEx}. However, a more detailed analysis of the sensitivity of our approach for noise or errors in the given moment information is beyond the scope of this paper.
}\end{rem}

\subsection{Density approximation as a constrained optimization problem}
\label{conSubSec}

As we have just mentioned, the minimizer $u^*_d$ of problem (\ref{unconOpt}) is not guaranteed to yield a nonnegative approximation 
even if $u\geq 0$ on $\Omega$. As we next see, the function $\x\mapsto u^*_{d+}(\x):=\max[0,u^*_d(\x)]$
also converges to $u$ for the $L^2$-norm but
\begin{itemize}
\item it does not 
satisfy the moments constraints (i.e., does not match the moments of $d\mu=ud\lambda$ up to order $d$);
\item it is not a polynomial anymore (but a piecewise polynomial).
\end{itemize}
In the sequel, we address the second point by approximating the density with a polynomial nonnegative on $\Omega$,
which for practical purposes, is easier to manipulate than a piecewise polynomial.
We do not address explicitly the first point, which is not as crucial in our opinion. Note however that at the price
of increasing its degree and adding linear constraints, the resulting polynomial approximation may match an (a priori)
fixed number of moments.

Adding a polynomial nonnegativity constraint to problem (\ref{unconOpt})
yields the constrained optimization problem:
\begin{equation}
\displaystyle\min_{u_d\in\R[\x]_d} \:\{ \displaystyle\Vert u-u_d\Vert_2^2 \::\: u_d \geq 0 \:\text{ on }\Omega\}
\label{nonnegOpt}
\end{equation}
which, despite convexity, is untractable in general. We consider two alternative optimization problems to enforce nonnegativity of the 
approximation; the first one considers necessary conditions of positivity whereas the second one considers sufficient conditions for positivity.

\subsection*{Necessary conditions of positivity}
Consider the optimization problem:
\begin{equation}
\displaystyle\min_{u_d\in\R[\x]_d}\:\{ \displaystyle\Vert u-u_d\Vert_2^2 \::\: \M_d(u_d\,\z) \succeq 0\,\},
\label{noliftOpt}
\end{equation}
where $\z=(z_{\alpha})$ is the moment sequence of $\lambda$ and $\M_d(u_d\,\z)$ is the localizing matrix associated with
$u_d$ and $\z$. Observe that problem (\ref{noliftOpt}) is a convex optimization problem because the objective function
is convex quadratic and the feasible set is defined by linear matrix inequalities (LMIs).

The rationale behind the semidefiniteness constraint $\M_d(u_d\,\z)\succeq0$ in (\ref{noliftOpt}) follows from
a result in \cite{newlook} which states that if ${\rm supp}\,\lambda=\Omega$ and $\M_d(u_d\,\z)\geq0$ for all $d$ then
$u_d\geq0$ on $\Omega$.

\begin{lem}
\label{denseLemma}
If $\Omega$ is compact then $P(\Omega)$ is dense in $L^2(\Omega,\lambda)_+$ with respect to the $L^2$-norm.
\end{lem}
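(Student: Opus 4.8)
The plan is to factor the approximation through continuous functions and to deal with the sign constraint by two elementary corrections. The density of $\R[\x]$ in $L^2(\Omega,\lambda)$ used in Proposition~\ref{propConv} does not suffice by itself, since the approximating polynomials it provides need not be nonnegative on $\Omega$; the whole point is to repair this while keeping control of the $L^2$-error.

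First I would reduce to approximating a nonnegative \emph{continuous} function. Fix $u\in L^2(\Omega,\lambda)_+$ and $\varepsilon>0$. Since $\Omega$ is compact and $\lambda$ is finite, $C(\Omega)$ is dense in $L^2(\Omega,\lambda)$, so there is $g\in C(\Omega)$ with $\|u-g\|_2<\varepsilon/2$. Replacing $g$ by its truncation $g^+:=\max[0,g]\in C_+(\Omega)$ does not worsen the error: because $u\geq0$, truncation at $0$ is a pointwise contraction towards $[0,\infty)$, giving $|u-g^+|\leq|u-g|$ $\lambda$-a.e., hence $\|u-g^+\|_2\leq\|u-g\|_2$. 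So it suffices to approximate the nonnegative continuous function $f:=g^+$ by elements of $P(\Omega)$.

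Second, I would apply the Stone--Weierstrass theorem to $f$ on the compact set $\Omega$ to obtain a polynomial $q$ with $\delta:=\|f-q\|_{\infty}$ (sup-norm over $\Omega$) as small as desired. Since $q$ may dip below zero, I would shift it upward by setting $p:=q+\delta$. On $\Omega$ one has $q\geq f-\delta\geq-\delta$, so $p\geq f\geq0$, i.e. $p\in P(\Omega)$, while $\|f-p\|_{\infty}\leq 2\delta$. Finiteness of $\lambda$ then converts this into $\|f-p\|_2\leq 2\delta\,\sqrt{\lambda(\Omega)}$, which is $<\varepsilon/2$ once $\delta$ is chosen small enough.

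Combining the two steps by the triangle inequality yields $\|u-p\|_2<\varepsilon$ with $p\in P(\Omega)$, and since $\varepsilon$ was arbitrary this proves density. The only delicate issue---indeed the reason the lemma is not immediate---is the nonnegativity constraint, and it is resolved cheaply by the two devices above: the truncation $g\mapsto g^+$ at the continuous level (legitimate precisely because $u\geq0$) and the constant upward shift $q\mapsto q+\|f-q\|_{\infty}$ at the polynomial level. Both cost only a controlled amount of $L^2$-norm, and finiteness of $\lambda$ on the compact set $\Omega$ (guaranteed since all moments of $\lambda$ are assumed finite) is what lets the sup-norm control translate into $L^2$ control.
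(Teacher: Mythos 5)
Your proof is correct and follows essentially the same route as the paper's: approximate $u$ in $L^2$ (the paper uses polynomials, you use continuous functions, an immaterial difference), truncate at zero --- which costs nothing precisely because $u\geq0$ --- then apply Stone--Weierstrass and shift the resulting polynomial up by its sup-norm error to force nonnegativity on $\Omega$. The paper verifies the truncation step by splitting the integral over $\{u_d<0\}$ and its complement, whereas you use the pointwise contraction $|u-g^+|\leq|u-g|$; these are the same estimate.
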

\begin{proof}
As $\Omega$ is compact the polynomials are dense in $L^2(\Omega,\lambda)$. Hence
there exists a sequence $(u_d)\subset\R[\x]$, $d\in\N$, such that $\Vert u-u_d\Vert_2\to0$ as $d\to\infty$.
But then the sequence $(u_d^+)$, $d\in\N$, with $u_d^+(\x):=\max[0,u_d(\x)]$ for all $\x\in\Omega$, also converges
for the $L^2$-norm. Indeed,
\begin{eqnarray*}
\int_\Omega(u-u_d)^2d\lambda&=&\displaystyle\int_{\Omega\cap\{\x: u_d(\x)<0\}}(u-u_d)^2d\lambda+\displaystyle\int_{\Omega\cap\{\x: u_d(\x)\geq0\}}(u-u_d)^2d\lambda\\
&\geq&\displaystyle\int_{\Omega\cap\{\x: u_d(\x)<0\}}u^2d\lambda+\displaystyle\int_{\Omega\cap\{\x: u_d(\x)\geq0\}}(u-u_d)^2d\lambda\\
&=&\int_\Omega(u-u_d^+)^2d\lambda=\Vert u-u_d^+\Vert_2^2.
\end{eqnarray*}
So let $k\in\N$ and $d_k\in\N$ be such that $\Vert u-u_{d_k}\Vert_2<k^{-1}$ and so $\Vert u-u_{d_k}^+\Vert_2<k^{-1}$.
As $u_{d_k}^+$ is continuous and $\Omega$ is compact, by the Stone-Weierstrass theorem there exists a sequence
$(v_{d_k\ell})\subset\R[\x]$, $\ell\in\N$, that converges to $u_{d_k}^+$ for the supremum norm.
Hence $\sup_{\x\in\Omega}\vert u_{d_k}^+-v_{d_k\ell}\vert<k^{-1}$ for all $\ell\geq\ell_{k}$ (for some $\ell_k$).
Therefore, the polynomial $w_{d_k}:=v_{d_k\ell_k}+k^{-1}$ is positive on $\Omega$ and 
\begin{eqnarray*}
\Vert u-w_{d_k\ell_k}\Vert_2&\leq&\underbrace{\Vert u-u_{d_k}^+\Vert_2}_{<k^{-1}}+\Vert u_{d_k}^+-w_{d_k\ell_k}\Vert_2\\
&\leq&k^{-1}+\underbrace{\Vert u_{d_k}^+-v_{d_k\ell_k}\Vert_2}_{<k^{-1}\lambda(\Omega)^{1/2}}+\underbrace{\Vert v_{d_k\ell_k}-w_{d_k\ell_k}\Vert_2}
_{=k^{-1}\lambda(\Omega)^{1/2}}\\
&\leq&k^{-1}+2k^{-1}\lambda(\Omega)^{1/2}.
\end{eqnarray*}
Therefore we have found a sequence $(w_{d_k\ell_k})\subset P(\Omega)$, $k\in\N$, such that
$\Vert u-w_{d_k\ell_k}\Vert_2\to0$ as $k\to\infty$.
\end{proof}

\begin{prop}
\label{propConvnolift}
Let $\Omega$ be compact with nonempty interior. Then
problem (\ref{noliftOpt}) has an optimal solution $u^*_d$ for every $d\in\N$,
and $\Vert u-u^*_d\Vert_2 \to0$ as $d\to\infty$.
\end{prop}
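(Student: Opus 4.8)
The plan is to prove the two assertions separately: first the existence of a minimizer for each fixed $d$, then the $L^2$-convergence as $d\to\infty$.

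For existence I would rewrite the objective exactly as in the proof of Proposition \ref{L2minProp}: for $u_d\in\R[\x]_d$ with coefficient vector $\u_d$ one has $\Vert u-u_d\Vert_2^2=\u_d^T\M_d(\z)\u_d-2\,\u_d^T\y+\int_\Omega u^2\,d\lambda$. Since $\M_d(\z)\succ0$ (established exactly as in Proposition \ref{L2minProp}, from $\Omega$ having nonempty interior and $\lambda$ having full support $\text{supp}\,\lambda=\Omega$), this is a strictly convex, coercive quadratic in $\u_d$. The feasible set $\{\u_d:\M_d(u_d\,\z)\succeq0\}$ is defined by a linear matrix inequality, hence closed and convex, and it is nonempty because $u_d\equiv0$ gives $\M_d(0\cdot\z)=0\succeq0$. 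Intersecting any sublevel set of the coercive objective with this closed feasible set yields a compact set on which the continuous objective attains its infimum, producing the optimal solution $u^*_d$ (in fact unique, by strict convexity, though uniqueness is not needed here).

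For the convergence I would exploit Lemma \ref{denseLemma}. As $u$ is a density we have $u\in L^2(\Omega,\lambda)_+$, so for each $\epsilon>0$ the lemma supplies a polynomial $w\in P(\Omega)$ with $\Vert u-w\Vert_2<\epsilon$. The crucial observation is that such a nonnegative polynomial is feasible for problem (\ref{noliftOpt}) at \emph{every} order $d\geq\deg w$, not merely at $d=\deg w$: since $w\geq0$ on $\Omega\supseteq\text{supp}\,\lambda$, the defining identity of the localizing matrix gives $\langle\f,\M_d(w\,\z)\f\rangle=\int_\Omega f^2\,w\,d\lambda\geq0$ for all $\f$, whence $\M_d(w\,\z)\succeq0$ for all $d$. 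Consequently, for every $d\geq\deg w$ the polynomial $w$ is admissible in (\ref{noliftOpt}), and optimality of $u^*_d$ yields $\Vert u-u^*_d\Vert_2\leq\Vert u-w\Vert_2<\epsilon$. Since $\epsilon>0$ is arbitrary, this gives $\Vert u-u^*_d\Vert_2\to0$ along the \emph{full} sequence.

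The step I expect to require the most care is exactly this passage from a single approximating polynomial to a bound valid for all large $d$. The feasible sets of (\ref{noliftOpt}) are not nested—raising $d$ simultaneously enlarges the polynomial space and strengthens the semidefinite constraint—so the optimal values need not be monotone in $d$, and a naive argument would only deliver convergence along the subsequence $d_k=\deg w_k$. The argument above circumvents monotonicity entirely by using that a \emph{fixed} nonnegative polynomial remains feasible at all higher orders, which is precisely what the localizing-matrix characterization provides; this is the key point that upgrades subsequential convergence to full-sequence convergence.
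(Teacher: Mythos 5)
Your proposal is correct and follows essentially the same route as the paper: existence is obtained from boundedness of minimizing sequences (your coercivity phrasing of the quadratic $\u_d^T\M_d(\z)\u_d-2\u_d^T\y$ is a minor variant of the paper's ball argument) together with closedness of the LMI feasible set, and convergence is obtained from Lemma \ref{denseLemma} combined with the observation that a polynomial nonnegative on $\Omega$ satisfies $\M_d(w\,\z)\succeq 0$ for \emph{every} $d$ and is therefore feasible at all orders $d\geq\deg w$. The point you flag as delicate is exactly the one the paper handles by choosing $k_d:=\max\{k:\deg v_k\leq d\}$, so the two arguments coincide in substance.
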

\begin{proof}
Fix $d$ and
consider a minimizing sequence $(u_\ell)\subset\R[\x]_d$ with $\Vert u-u_\ell\Vert_2^2$ monotonically decreasing and
converging to a given value as $\ell\to\infty$.
We have $\Vert u_\ell\Vert_2\leq \Vert u\Vert_2+\Vert u-u_\ell\Vert_2\leq \Vert u\Vert_2+\Vert u-u_0\Vert_2$ for all $\ell \in \N$.
Therefore as $\Vert \cdot\Vert_2$ defines a norm on the finite dimensional space $\R[\x]_d$ ($\Omega$ has nonempty interior)
the whole sequence $(u_\ell)$ is contained in the ball $\{v\,:\,\Vert v\Vert_2\leq \Vert u\Vert_2+\Vert u-u_0\Vert_2\}$.
As the feasible set is closed, problem (\ref{noliftOpt}) has an optimal solution.
 
Let $(u^*_d)\subset\R[\x]$ be a sequence of optimal solutions of problem (\ref{noliftOpt}). By Lemma \ref{denseLemma},
$P(\Omega)$ is dense in $L^2(\Omega)_+$. Thus there exists a sequence $(v_k)\subset\R [\x]_{\geq 0}$,
$k\in\N$, with $\Vert u-v_k\Vert_2\to 0$ as $k\to\infty$. As $v_k\geq0$ on $\Omega$
then necessarily $\M_d(v_k\,\z)\succeq0$ for all $d$ and all $k$. In particular $\M_{d}(v_{k_d}\,\z)\succeq0$ 
where $k_d:=\max\{k\,:\,{\rm deg}\,v_k\leq d\}$. 
Therefore  $v_{k_d}\in\R[\x]_d$ is a feasible solution of problem (\ref{noliftOpt}) which yields
$\Vert u-v_{k_d}\Vert_2^2\geq\Vert u-u^*_d\Vert_2^2$ for all $d$. Combining with $\Vert u-v_{k_d}\Vert_2\to 0$ yields the desired result.
\end{proof}

\subsection*{Via sufficient conditions of positivity}

Let $\b_d(\x):=(1,x_1,\ldots,x_n,x_1^2, x_1x_2,\ldots,x_n^d)^T$ denote the standard monomial basis of $\R [\x]_d$.
Let $\Omega$ be a basic compact semi-algebraic set
defined by $\Omega=\{\x\in\R^n\mid g_j(\x)\geq 0,\:j=1,\ldots ,m\}$ for some polynomials
$g_j\in\R[\x]$, $j=1,\ldots,m$ with $d_j=\lceil(\deg\,g_j)/2\rceil$. Then, with $d\geq \max_jd_j$, consider the optimization problem:
\begin{equation}
\begin{array}{cl}
\displaystyle\min_{u_d\in\R[\x]_d} & \displaystyle\Vert u-u_d\Vert_2^2\\
\text{s.t.} & u_d(\x) = \b_d(\x)^T \A_0\b_d(\x) + \sum_{j=1}^m \b_{d-d_j}(\x)^T \A_j\b_{d-d_j}(\x)g_j(\x), \\
& \A_0 \in {\mathbb S}_{+}^{s(d)},\:\A_j \in {\mathbb S}_{+}^{s(d-d_j)}, \quad j=1,\ldots,m.
\end{array}
\label{putinarOpt}
\end{equation}
Since the equality constraints are linear in $u_d$ and the entries of $\A_j$, $j=0,\ldots,m$, the feasible set
of (\ref{putinarOpt}) is a convex LMI set. Moreover, the objective function is convex quadratic.

Note that whereas the semidefinite constraint $\M_d(u_d\,\z)\succeq0$ in (\ref{noliftOpt}) was a relaxation of 
the nonnegativity constraint $u_d\geq 0$ on $\Omega$, in (\ref{putinarOpt}) a feasible solution $u_d$
is necessarily nonnegative on $\Omega$ because the LMI constraint on $u_d$
is a Putinar certificate of positivity on $\Omega$.
However, in problem (\ref{putinarOpt}) we need to introduce $m+1$ auxiliary matrix variables $\A_j$, whereas (\ref{noliftOpt}) is an optimization problem in the original coefficient vector $\u_d$ and does not require such a lifting. Thus, problem (\ref{noliftOpt}) is computationally easier to handle than problem (\ref{putinarOpt}), although both are convex SDPs, which are substantially harder to solve than problem (\ref{unconQP}). 
  
\begin{prop}
\label{propConvLift}
Let $\Omega$ be compact with nonempty interior. For every $d\geq\max_jd_j$, problem (\ref{putinarOpt})
has an optimal solution $u^*_d$ and $\Vert u-u^*_{d}\Vert^2_2\to 0$ as $d\to\infty$.
\end{prop}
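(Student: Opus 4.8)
The plan is to follow the same two-step pattern as in Proposition \ref{propConvnolift} (existence for fixed $d$, then convergence via a dense family of feasible approximants), but two features of (\ref{putinarOpt}) force extra work. First, the decision variables now include the auxiliary Gram matrices $\A_0,\ldots,\A_m$, and since the objective depends only on the image $u_d$ of these matrices, I must rule out the possibility that a minimizing sequence has Gram matrices escaping to infinity. Second, and more importantly, feasibility for (\ref{putinarOpt}) demands an \emph{explicit} Putinar certificate, so a generic nonnegative polynomial need not be feasible: only strictly positive polynomials will serve as competitors. I will assume throughout, as is standard when invoking \cite{putinar}, that the quadratic module generated by the $g_j$ is Archimedean (which can always be arranged by adjoining a redundant ball constraint).

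For existence at a fixed $d\geq\max_j d_j$, I would first note that since $\Omega$ has nonempty interior, $\Vert\cdot\Vert_2$ is a genuine norm on the finite-dimensional space $\R[\x]_d$, so $\u_d\mapsto\Vert u-u_d\Vert_2^2$ is a coercive strictly convex quadratic and attains its minimum over any nonempty closed subset of $\R[\x]_d$. It therefore suffices to prove that the feasible set — the image in $\R[\x]_d$ of the cone of tuples of positive semidefinite matrices under the map in (\ref{putinarOpt}), i.e. the truncated quadratic module at level $d$ — is closed. The main obstacle is precisely that this image is a linear image of a closed cone and so need not be closed a priori. To handle it I would take a convergent sequence $u^{(\ell)}=\b_d^T\A_0^{(\ell)}\b_d+\sum_j \b_{d-d_j}^T\A_j^{(\ell)}\b_{d-d_j}\,g_j$ and integrate against $\lambda$: since every summand is nonnegative on $\Omega$,
\[
\int_\Omega u^{(\ell)}\,d\lambda=\langle\A_0^{(\ell)},\M_d(\z)\rangle+\sum_j\langle\A_j^{(\ell)},\M_{d-d_j}(g_j\z)\rangle
\]
is a sum of nonnegative terms with bounded total. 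Because $\M_d(\z)\succ0$ and each localizing matrix $\M_{d-d_j}(g_j\z)\succ0$ (the form $\f\mapsto\int_\Omega f^2 g_j\,d\lambda$ is positive definite on $\R[\x]_{d-d_j}$ under the standing assumptions, as $g_j>0$ on a nonempty open subset of $\Omega$ carrying positive $\lambda$-mass), the inequality $\langle\A_j^{(\ell)},B\rangle\geq\lambda_{\min}(B)\,\mathrm{tr}\,\A_j^{(\ell)}$ bounds each trace, hence each $\A_j^{(\ell)}$. Passing to a convergent subsequence and using closedness of the positive semidefinite cone produces a representation of the limit, giving closedness and thus existence.

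For convergence, I would invoke Lemma \ref{denseLemma}, whose proof in fact furnishes polynomials $w_k$ that are \emph{strictly} positive on $\Omega$ with $\Vert u-w_k\Vert_2\to0$; this strict positivity is exactly what is needed here. Being strictly positive on the compact set $\Omega$, each $w_k$ lies in the quadratic module by Putinar's Positivstellensatz \cite{putinar}, hence admits a representation of the form required in (\ref{putinarOpt}) at some degree, so there is $d(k)$ for which $w_k$ is feasible at every level $d\geq d(k)$. Optimality of $u^*_{d(k)}$ then yields $\Vert u-u^*_{d(k)}\Vert_2\leq\Vert u-w_k\Vert_2\to0$ along the subsequence $(d(k))$. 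To upgrade this to the full sequence I would use that the feasible sets of (\ref{putinarOpt}) are nested increasing in $d$ — a degree-$d$ Putinar certificate is also a valid degree-$(d+1)$ certificate — so $d\mapsto\Vert u-u^*_d\Vert_2$ is nonincreasing; combined with convergence along $(d(k))$ this forces $\Vert u-u^*_d\Vert_2\to0$, as claimed.
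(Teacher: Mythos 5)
Your proposal follows essentially the same route as the paper's proof: existence is obtained by integrating the Putinar representation against $\lambda$ and using $\M_d(\z)\succ0$ and $\M_{d-d_j}(g_j\,\z)\succ0$ to bound the Gram matrices (hence closedness/compactness of the relevant set), and convergence is obtained by combining Lemma \ref{denseLemma} with Putinar's Positivstellensatz to produce strictly positive feasible competitors, then using monotonicity of $d\mapsto\Vert u-u^*_d\Vert_2$ to pass from the subsequence to the full sequence. Your write-up is in fact somewhat more careful than the paper's on two points it leaves implicit --- the Archimedean hypothesis needed for \cite{putinar} and the justification that the localizing matrices are positive \emph{definite} --- but the argument is the same.
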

\begin{proof}
With $d\geq\max_jd_j$ fixed, consider a minimizing sequence $(u^*_d)\subset\R[\x]_d$ for problem (\ref{putinarOpt}).
As in the proof of Proposition \ref{propConvnolift} one has $\Vert u^*_d\Vert_1\leq \Vert u^*_d\Vert_2\leq \Vert u\Vert_2+\Vert u-u^*_0\Vert_2$.
As $u^*_d$ is feasible,
\[u^*_d(\x) = \b_d(\x)^T \A^d_0\b_d(\x) + \sum_{j=1}^m \b_{d-d_j}(\x)^T \A^d_j\b_{d-d_j}(\x)g_i(\x),\]
for some real symmetric matrices $\A^d_j\succeq0$, $j=0,\ldots,m$.
Rewriting this as
\[u^*_d(\x)\,=\,\langle\A^d_0,\b_d(\x)\b_d(\x)^T\rangle+\sum_{j=1}^m \langle \A^d_j,\b_{d-d_j}(\x)\b_{d-d_j}(\x)^T\rangle,\]
and integrating with respect to $\lambda$ yields
\[\langle\A^d_0,\M_d(\z)\rangle+\sum_{j=1}^m \langle \A^d_j,\M_{d-d_j}(g_j\,\z)\rangle\,=\,\int_\Omega u^*_d\,d\lambda\,\leq\,
\Vert u^*_d\Vert_1\,\leq\,a,\]
with $a:=\Vert u^*_d\Vert_1\leq\Vert u\Vert_2+\Vert u-u^*_0\Vert_2$.
Hence, for every $d$,
\[\langle \A_0,\M_d(\z)\rangle\,\leq\,a\quad\mbox{ and }\quad\langle \A^d_j,\M_{d-d_j}(g_j\,\z)\rangle\,\leq\,a,\quad j=1,\ldots,m.\]
As $\M_d(\z)\succ0$, $\M_{d-d_j}(g_j\,\z)\succ0$, and $\A^d_j\succeq0$, $j=1,\ldots,m$, we conclude that
all matrices $\A^d_j$ are bounded. Therefore the minimizing sequence $(u^*_d,(\A^d_j))$ belongs to a
closed bounded  set and as the mapping $v\mapsto \Vert u-v\Vert_2^2$ is continuous, an optimal solution exists.

From Lemma \ref{denseLemma} there exists $(v_k)\subset \R[\x]_{\geq0}$, $k\in\N$, such that 
$\Vert u-v_k\Vert^2_2\to 0$ as $k\to\infty$. Using properties of norms,
$\Vert u-v_k\Vert_2-k^{-1}\,\leq\,\Vert u-(v_k+k^{-1})\Vert_2\,\leq\,\Vert u-v_k\Vert_2+k^{-1}$,
and so $\Vert u-(v_k+k^{-1})\Vert_2\to 0$ as $k\to\infty$.
Moreover, as $v_k+k^{-1}$ is strictly positive on $\Omega$, by Putinar's Positivstellensatz \cite{putinar}, there exists $d_k$ such that 
\[v_k(\x)= \b_{d_k}(\x)^T \A_0\b_{d_k}(\x) + \sum_{j=1}^m \b_{d_k-d_j}(\x)^T \A_j\b_{d_k-d_j}(\x)g_j(\x), \quad\forall \x,\]
for some real matrices $\A_j \succeq 0$, $j=0,\ldots,m$. So letting 
$d^+_k=\max[{\rm deg}\,v_k,d_k]$, the polynomial $v_k+k^{-1}$ is a feasible solution of problem (\ref{putinarOpt}) whenever $d\geq d^+_k$
and with value $\Vert u-(v_k+k^{-1})\Vert^2_2\,\geq\,\theta_{d^+_k}$. Hence 
$\Vert u-v_{d^+_k}\Vert^2_2\to 0$ as $k\to\infty$, and by monotonicity of the sequence 
$(\Vert u-u^*_d\Vert_2^2)$, $d\in\N$, the result follows.
\end{proof}

\begin{rem}
{\rm Since $\Omega$ is compact, Proposition \ref{propConvnolift} and \ref{propConvLift} imply 
that minimizers of the two constrained $L^2$ norm minimization problems 
(\ref{noliftOpt}) and (\ref{putinarOpt}) converge almost uniformly to the 
unknown density $u$ as in the unconstrained case.
}\end{rem}

\begin{rem}
\label{rmkDom}
{\rm Our approach can handle quite general sets $\Omega$ and $K$ as support and frame for the unknown measure in the unconstrained and constrained cases,
the  only restriction being that (i) $\Omega$ and $K$ are basic compact semi-algebraic sets, and (ii)
one can compute all moments of $\lambda$ on $\Omega$.
In contrast, to solve problem (\ref{concaveOpt}) by local minimization algorithms
using gradient and possibly Hessian information, integrals of the type
$$ \int_{\Omega}\x^{\alpha}\exp\left(\sum_{\beta}u_{\beta}\x^{\beta}\right)d\lambda(\x)$$
must be evaluated. Such evaluations may be difficult as soon
as $n\geq3$. In particular in higher dimensions, cubature formulas for approximating such integrals are difficult to obtain if $\Omega$ is 
not a box or a simplex.
}\end{rem}

\section{Numerical experiments}
\label{secNumEx}

In this section, we demonstrate the potential of our method on a range of examples. We measure the approximation error between a density $u$ and its estimate
$u_d$ by the average error
\[
\bar{\epsilon}_d:=\int_{\Omega} |u(x)-u_d(x)| d\lambda(x)
\]
and the maximum pointwise error
\[
\hat{\epsilon}_d:=\max_{x\in\Omega} |u(x)-u_d(x)|.
\]
In some examples we also consider $\bar{\epsilon}^o_d$ and $\hat{\epsilon}^o_d$, the respective errors on particular segments of the interior of $\Omega$.
We compare the performance of our approach to the maximum entropy estimation from \S \ref{subSecMEE}. Both methods are encoded in Matlab. We implemented the
mean squared error ($L^2$) minimization approach for the standard monomial basis, which results in solving a linear system in the unconstrained case. In the constrained case we apply SeDuMi to solve the resulting SDP problem. In both cases, the numerical stability can be improved by using an orthogonal basis (such as Legendre or Chebychev polynomials) for $\R[x]$. In order to solve the unconstrained, concave optimization problem (\ref{concaveOpt}), we apply the Matlab Optimization Toolbox command {\tt fminunc} as a black-box solver. The observed performance of the maximum entropy estimation (MEE)
method may be improved when applying more specialized software.

\begin{figure}[h!]
\begin{center}
\includegraphics[width=0.45\textwidth]{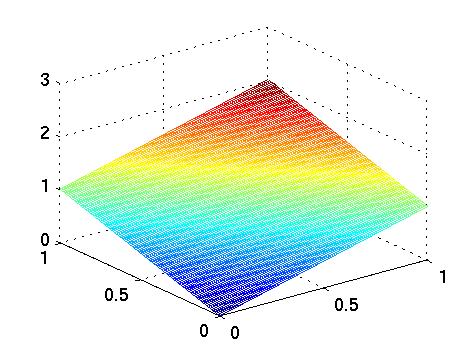} 
\includegraphics[width=0.45\textwidth]{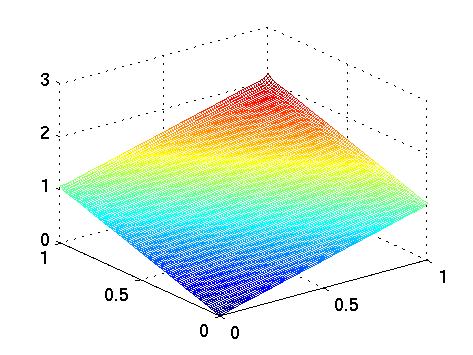} 
\caption{Degree 10 approximation of the density from exact moment vector (left) vs. perturbed moment vector (right).\label{mnatsaFig}}
\end{center}
\end{figure}

\begin{ex}
\label{mnatsaEx}
First we consider the problem of retrieving the density $u(\x)=x_1+x_2$
on $[0, 1]^2$ given its moments $y_{\alpha}=\int_{\Omega}\x^{\alpha}u(x)d\,\x=\frac{1}{(\alpha_1+1)(\alpha_2+2)} + \frac{1}{(\alpha_1+2)(\alpha_2+1)}$, which was considered as a test case in \cite{mnatsa}. This example is a priori very favorable for our technique since the desired $u$ is a polynomial itself. When solving problem (\ref{unconOpt}) for $d\in\{3,5,10\}$ we do obtain the correct solution $\u^*_d=(0,1,1,0,\ldots)^T$ in less than $0.1$ secs. Thus, the pointwise approximation is much better than the one achieved in \cite{mnatsa}. Moreover, $u^*_d \geq 0$ without adding LMI constraints.\\
The question arises of how the polynomial approximation behaves if the moment vector contains some noise, i.e. if it does not exactly coincide with the moment vector of the desired $u$. We solve problem (\ref{unconOpt}) for $\tilde{\y}:=\y+\epsilon$ where the maximal relative componentwise perturbation $\epsilon$ between $\y$ and $\tilde{\y}$ is less than $3\%$. The pointwise error between $u^*_{10}(\y)$ and the solution for the perturbed moment vector $u^*_{10}(\tilde{\y})$ is sufficiently small, as pictured in Figure \ref{mnatsaFig}.
\end{ex}

\begin{table}
\begin{center}
\begin{tabular}{|c|c|c|c|}
\hline $d$ & problem & $\bar{\epsilon}_d$ & $\hat{\epsilon}_d$ \\
\hline $20$ & (\ref{unconOpt}) & $0.0031$ & $0.0296$ \\
\hline $30$ & (\ref{unconOpt}) & $0.0022$ & $0.0265$ \\
\hline $50$ & (\ref{unconOpt}) & $0.0021$ & $0.0251$ \\
\hline
\end{tabular}
\caption{Estimating the density $u(x)=|x|$ from an exact moment vector.\label{absFunResultsExact}}
\end{center}
\end{table}

\begin{table}
\begin{center}
\begin{tabular}{|c|c|c|c|c|c|}
\hline $d$ & problem & $\bar{\epsilon}_d$ & $\hat{\epsilon}_d$ & $\bar{\epsilon}^o_d$ & $\hat{\epsilon}^o_d$ \\ 
\hline $10$ & (\ref{unconOpt}) & $0.0252$ & $0.2810$ & $0.0207$ & $0.1358$ \\
\hline $20$ & (\ref{unconOpt}) & $0.0244$ & $0.7934$ & $0.0142$ & $0.0952$ \\
\hline $30$ & (\ref{unconOpt}) & $0.0237$ & $1.0956$ & $0.0112$ & $0.1024$ \\
 $30$ & (\ref{putinarOpt}) & $0.0176$ & $0.4705$ & $0.0106$ & $0.0937$ \\
\hline $50$ & (\ref{unconOpt}) & $0.0236$ & $1.4591$ & $0.0088$ & $0.1023$ \\
$50$ & (\ref{putinarOpt}) & $0.0206$ & $0.6632$ & $0.0118$ & $0.0979$ \\
\hline
\end{tabular}
\caption{Estimating the density $u(x)=|x|$ from a perturbed moment vector.\label{absFunResultsNoise}}
\end{center}
\end{table}

\begin{figure}[ht]
\begin{center}
\includegraphics[width=0.45\textwidth]{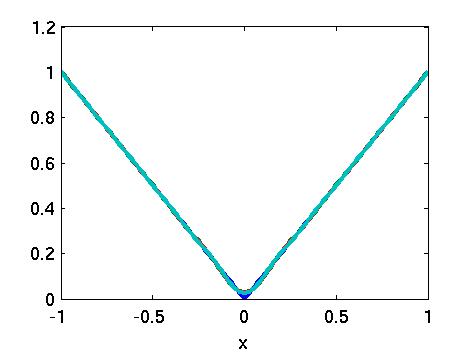} 
\includegraphics[width=0.45\textwidth]{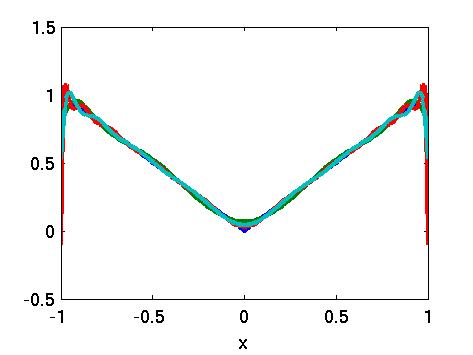} 
\caption{Mean squared error minimizers for $u(x)=|x|$ from exact (left) and perturbed (right) moments.
Blue: $u$, green: $u_{20}$, red: $u_{30}$, cyan: $u_{50}$.\label{absFunFig}}
\end{center}
\end{figure}

\begin{ex}
\label{absFunEx}
Next we consider recovering the function $u:[-1,1]\to\R$ with $u(x)=|x|$ as a first example of a nondifferentiable function. 
In a first step we solve problem (\ref{unconOpt}) for the exact moment vector with entries $y_k:=\int_{-1}^1 |x|x^k dx=\frac{1+(-1)^k}{k+2}$, $k=0,\ldots,d$ 
corresponding to the density $u$. The resulting estimates $u^*_d$ for $d\in\{20,30,50\}$ provide a highly accurate pointwise approximation of $u$ on the entire domain, as reported in Table \ref{absFunResultsExact} and pictured in Figure \ref{absFunFig} (left).\\
In a second step we consider a perturbed moment vector $\tilde{\y}$ as input.
When solving problem (\ref{unconOpt}), we observe that both errors $\hat{\epsilon}_d$, $\bar{\epsilon}_d$ on the interior of $[-1,1]$ -- in particular at the nondifferentiable point $x=0$ -- decrease for increasing $d$, whereas the pointwise error increases at the boundary of the domain. 
Although providing good approximations for $u$ on the entire interior of $[-1,1]$, the estimates $u^*_{30}$ and $u^*_{50}$ take negative values at the boundary $\{-1,1\}$. This is circumvented by solving problem (\ref{putinarOpt}). The new estimates are globally nonnegative while their approximation accuracy
is only slightly worse than in the unconstrained case, as reported in Table \ref{absFunResultsNoise} and pictured in Figure \ref{absFunFig} (right). 
\end{ex}

\begin{figure}[ht]
\begin{center}
\includegraphics[width=0.45\textwidth]{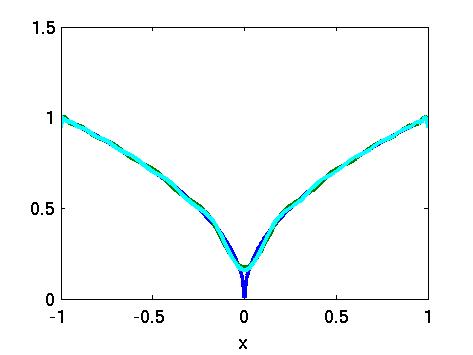} 
\includegraphics[width=0.45\textwidth]{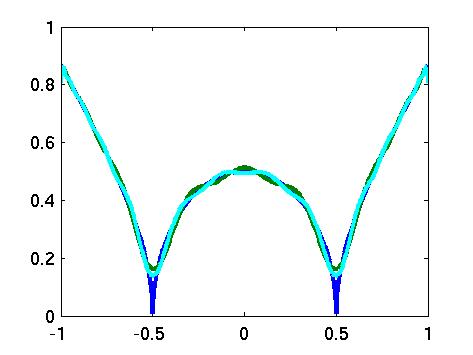}
\caption{Mean squared error minimizers for $u_1$ (left) and $u_2$ (right) from exact moments.
Blue: $u$, green: $u_{20}$, cyan: $u_{50}$.\label{notLocLipFig}}
\end{center}
\end{figure}

\begin{ex}
\label{nonLocLipEx}
Consider the functions $u_1,\, u_2:[-1,1]\rightarrow\R, \; u_1(x)=|x|^{\frac{1}{2}},\, u_2(x)=|\frac{1}{4}-x^2|^{\frac{1}{2}}$, which are both not locally Lipschitz. Applying mean squared error minimization for $d\in\{20,50\}$ to the exact moment vectors yields accurate polynomial approximations for both functions on their entire domains, even at the boundary and at the points where the functions are not locally Lipschitz, cf. Figure \ref{notLocLipFig}.
\end{ex}

\subsection{Recovering geometric objects}

One of the main applications of density estimation from moments is the shape reconstruction of geometric objects in image analysis. There has been extensive research on this topic, c.f. \cite{teague, liao} and the references therein. The reconstruction of geometric objects is a particular case of density estimation when $u=I_K$, i.e. the desired density $u$ is the indicator function of the geometric object $K\subset \Omega\subset\R^n$. Its given moments $y_{\alpha}=\int_{\Omega}\x^{\alpha}I_K(\x)d\lambda(\x)=\int_K \x^{\alpha}d\lambda(\x)$ do not depend on the frame $\Omega$. However, $\Omega$ does enter when computing the matrix $\M_d(\z)$ in problem (\ref{unconQP}). As indicated in Remark \ref{frameRmk} and demonstrated below, the choice of the enclosing frame $\Omega$ for $K$ is crucial for the pointwise approximation accuracy for a fixed order $d$. Since $u$ has a special structure, we derive an estimate $K_d$ for $K$ by choosing a superlevel set of $u^*_d$ as proposed in \cite{teague}:
\begin{equation*}
K_d := \{\x\in\R^n \mid u^*_d(\x)\geq \frac{1}{2} \}.
\end{equation*}

\begin{figure}[ht]
\begin{center}
\fbox{\includegraphics[width=0.18\textwidth, height=0.1\textheight]{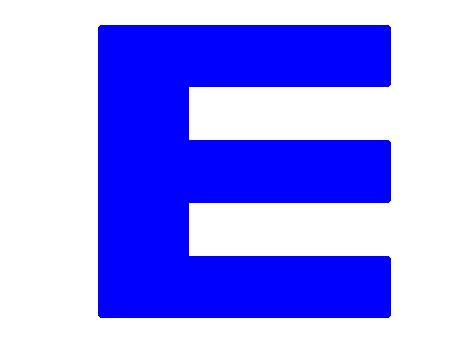}}
\fbox{\includegraphics[width=0.18\textwidth, height=0.1\textheight]{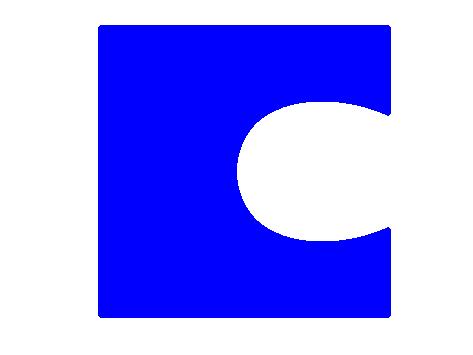}} 
\fbox{\includegraphics[width=0.18\textwidth, height=0.1\textheight]{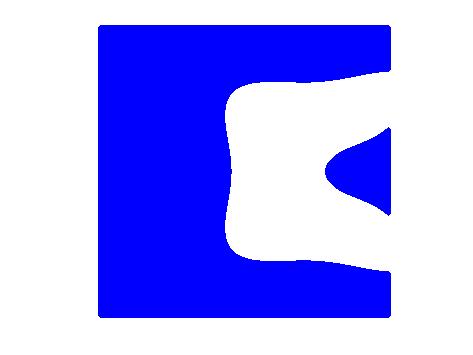}} 
\fbox{\includegraphics[width=0.18\textwidth, height=0.1\textheight]{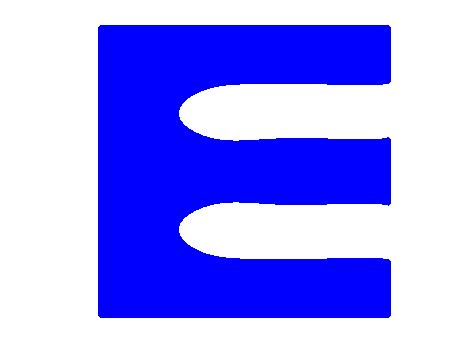}} 
\fbox{\includegraphics[width=0.18\textwidth, height=0.1\textheight]{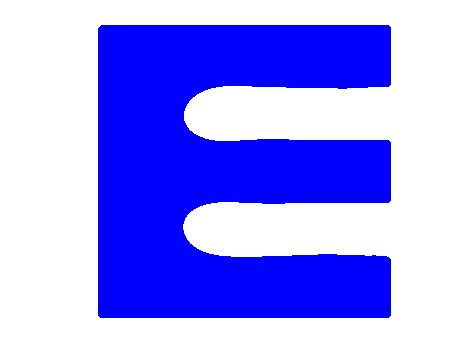}}\\
\fbox{\includegraphics[width=0.18\textwidth, height=0.1\textheight]{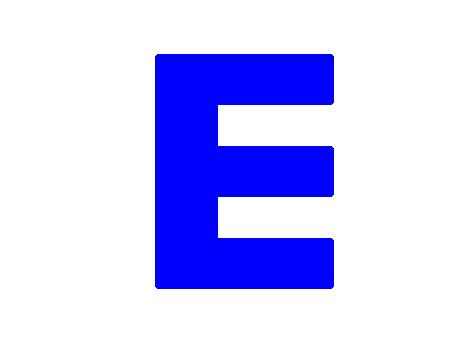}}
\fbox{\includegraphics[width=0.18\textwidth, height=0.1\textheight]{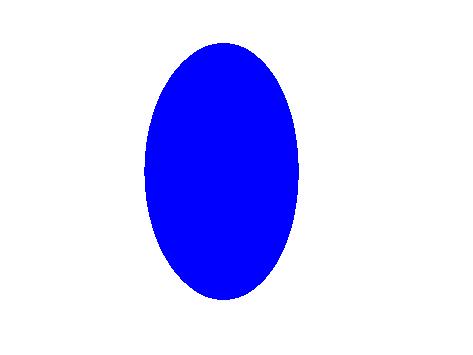}} 
\fbox{\includegraphics[width=0.18\textwidth, height=0.1\textheight]{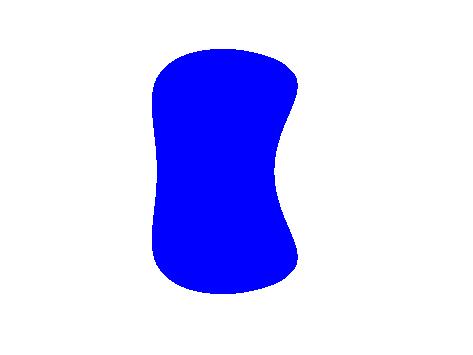}} 
\fbox{\includegraphics[width=0.18\textwidth, height=0.1\textheight]{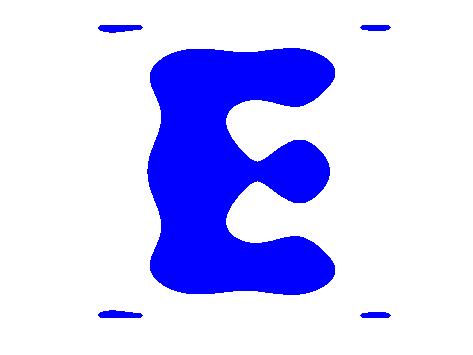}} 
\fbox{\includegraphics[width=0.18\textwidth, height=0.1\textheight]{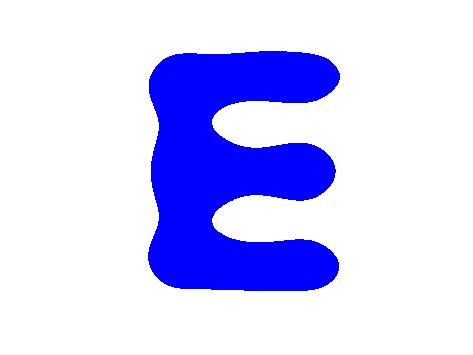}}\\
\fbox{\includegraphics[width=0.18\textwidth, height=0.1\textheight]{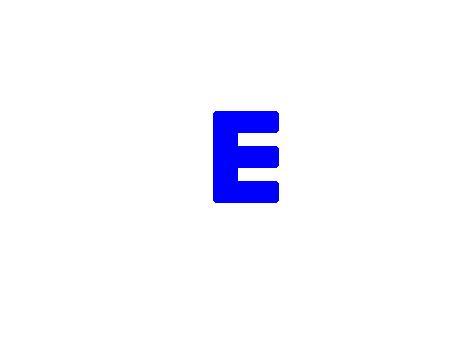}}
\fbox{\includegraphics[width=0.18\textwidth, height=0.1\textheight]{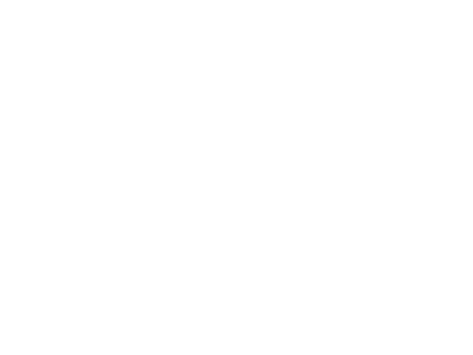}} 
\fbox{\includegraphics[width=0.18\textwidth, height=0.1\textheight]{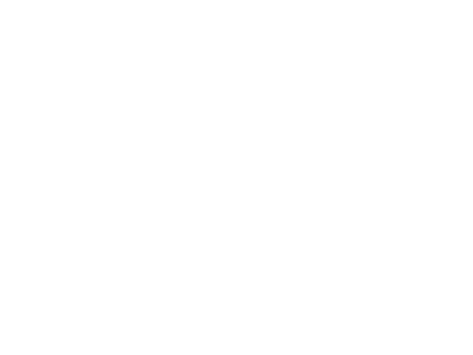}} 
\fbox{\includegraphics[width=0.18\textwidth, height=0.1\textheight]{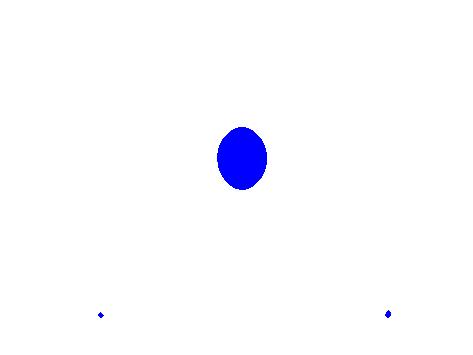}} 
\fbox{\includegraphics[width=0.18\textwidth, height=0.1\textheight]{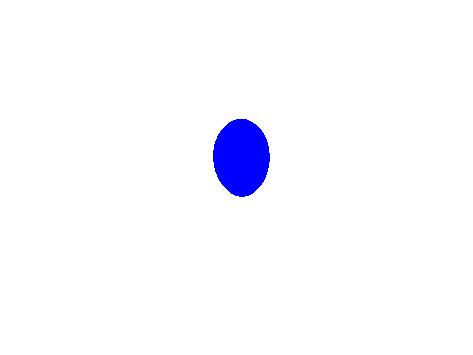}}
\caption{Recovering the letter E with mean squared error minimization; tight frames (top) vs. loose frames (bottom); original (left) vs. estimates for degrees $d\in\{3,5,8,10\}$ (from left to right).\label{EFig}}
\end{center}
\end{figure}

\begin{figure}[ht]
\begin{center}
\fbox{\includegraphics[width=0.18\textwidth, height=0.1\textheight]{Etight}}
\fbox{\includegraphics[width=0.18\textwidth, height=0.1\textheight]{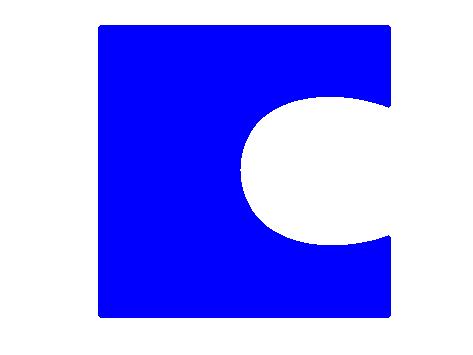}}
\fbox{\includegraphics[width=0.18\textwidth, height=0.1\textheight]{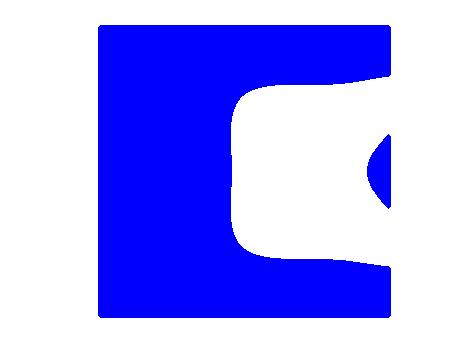}} 
\fbox{\includegraphics[width=0.18\textwidth, height=0.1\textheight]{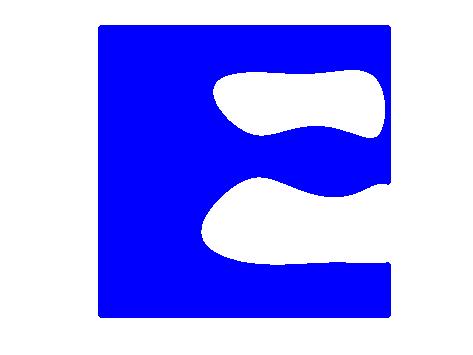}} 
\caption{Recovering the letter E with maximum entropy estimation; original (left) vs. estimates for degrees $d\in\{3,5,8\}$ (from left to right).\label{EFigMEE}}
\end{center}
\end{figure}

\begin{ex}
\label{exE}
A first object we recover is shaped like the letter E as in \cite{teague}. We determine the mean squared error minimizer for the same moment vector $\y$ with $d\in\{3,5,8,10\}$, but for three different frames $\Omega$. As pictured in Figure \ref{EFig}, we are able to reconstruct the estimates $K_d$ derived in \cite{teague} when $\Omega$ is chosen tight. Moreover, we observe that this good approximation of the severely nonconvex set $K$ and its discontinuous indicator function for a small number of moments $d$ depends heavily on the choice for $\Omega$. The wider the frame, the worse gets the approximation accuracy of the truncated estimate. Applying maximum entropy estimation for $d\in\{3,5,8\}$ yields density estimates of comparable accuracy than mean squared error minimization, cf. Figure \ref{EFigMEE}. However, the computational effort is much larger: for $d\in\{3,5,8\}$, the computational times for maximum entropy estimation are
82, 902 and 4108 seconds, respectively, whereas the mean squared error minimizer can be determined in less than one second for these values of $d$.
\end{ex}

\begin{figure}[ht]
\begin{center}
\fbox{\includegraphics[width=0.18\textwidth, height=0.1\textheight]{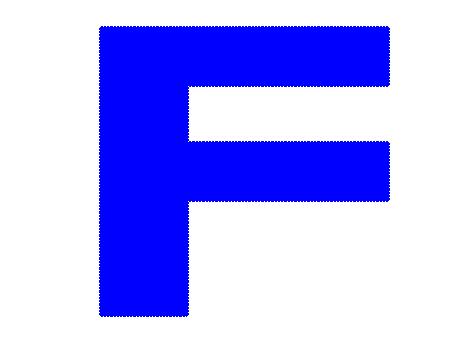}}
\fbox{\includegraphics[width=0.18\textwidth, height=0.1\textheight]{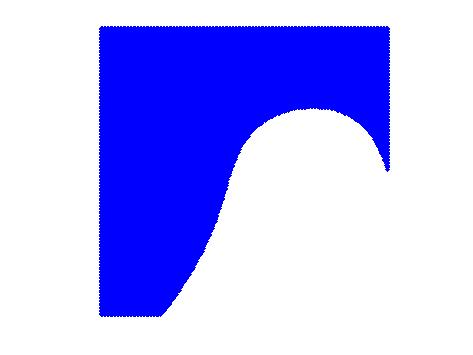}} 
\fbox{\includegraphics[width=0.18\textwidth, height=0.1\textheight]{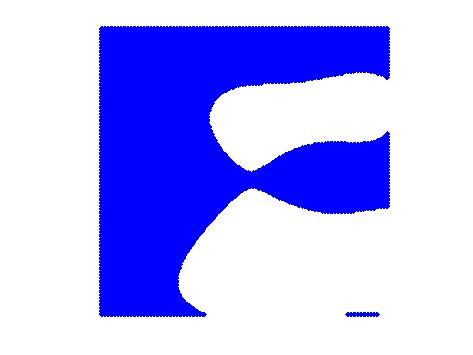}} 
\fbox{\includegraphics[width=0.18\textwidth, height=0.1\textheight]{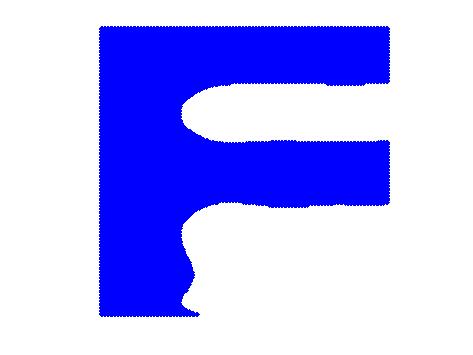}} 
\fbox{\includegraphics[width=0.18\textwidth, height=0.1\textheight]{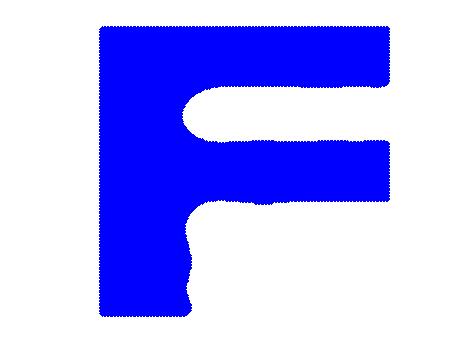}}\\
\fbox{\includegraphics[width=0.18\textwidth, height=0.1\textheight]{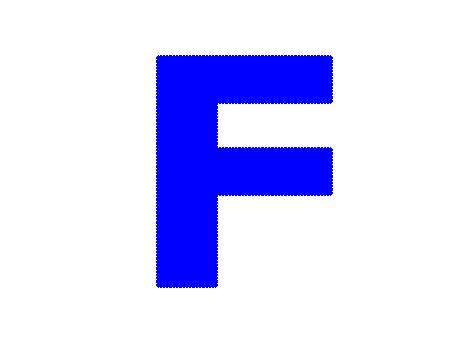}}
\fbox{\includegraphics[width=0.18\textwidth, height=0.1\textheight]{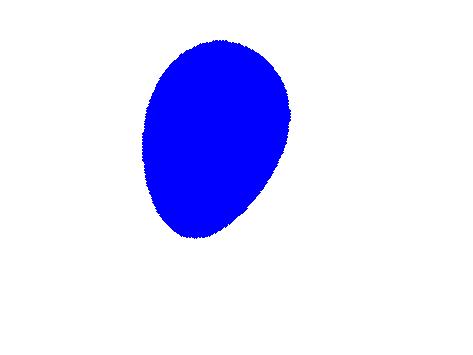}} 
\fbox{\includegraphics[width=0.18\textwidth, height=0.1\textheight]{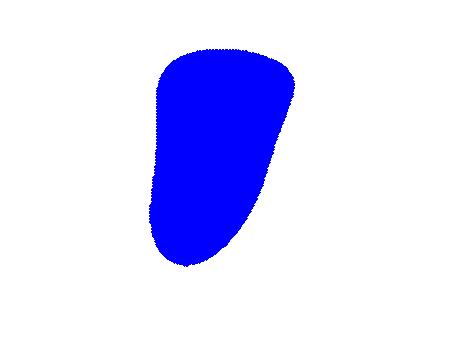}} 
\fbox{\includegraphics[width=0.18\textwidth, height=0.1\textheight]{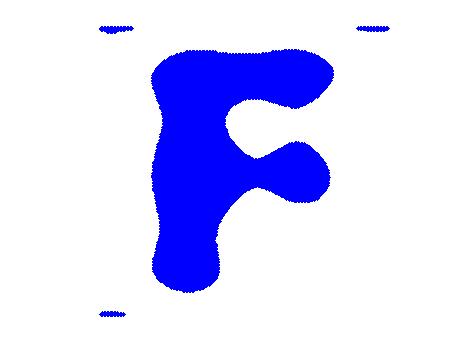}} 
\fbox{\includegraphics[width=0.18\textwidth, height=0.1\textheight]{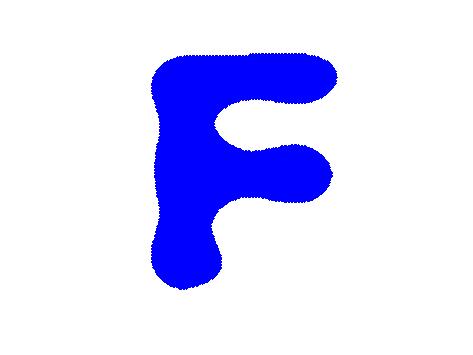}}\\
\fbox{\includegraphics[width=0.18\textwidth, height=0.1\textheight]{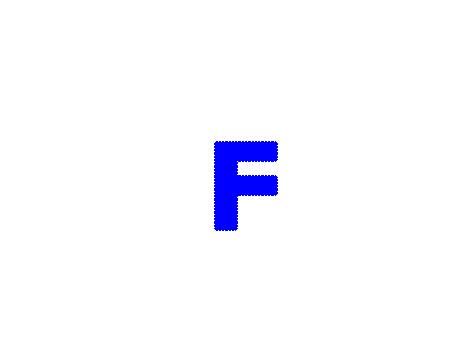}}
\fbox{\includegraphics[width=0.18\textwidth, height=0.1\textheight]{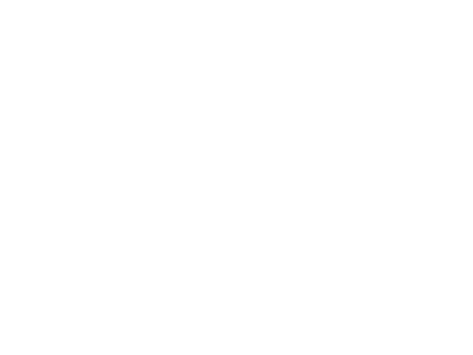}} 
\fbox{\includegraphics[width=0.18\textwidth, height=0.1\textheight]{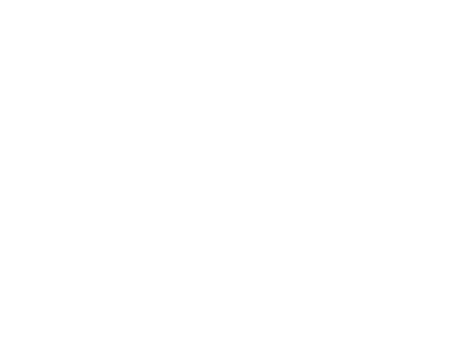}} 
\fbox{\includegraphics[width=0.18\textwidth, height=0.1\textheight]{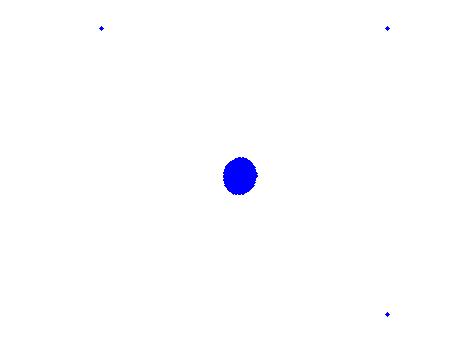}} 
\fbox{\includegraphics[width=0.18\textwidth, height=0.1\textheight]{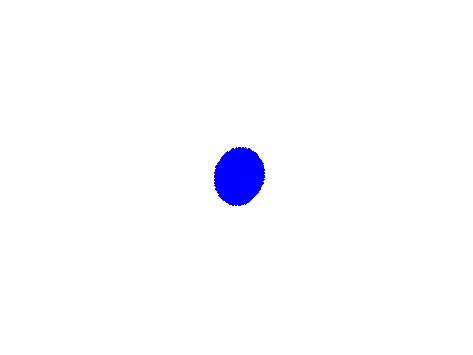}}
\caption{Recovering the letter F with mean squared error minimization; tight frames (top) vs. loose frames (bottom); original (left) vs. estimates for degrees $d\in\{3,5,8,10\}$ (from left to right).\label{FFig}}.
\end{center}
\end{figure}

\begin{ex}
Secondly we consider recovering an F-shaped set, a less symmetric example than the E-shaped set of Example \ref{exE}. As previously, we observe 
that the approximation accuracy of $K_d$ relies heavily on the frame $\Omega$. Even though the set $K$ has a complicated geometry, $K_{10}$ approximates $K$ accurately if $\Omega$ is chosen sufficiently tight, cf. Figure \ref{FFig}.
\end{ex}

\begin{ex}
Consider approximating $K:=\{\x\in\R^2\mid x_1(x_1^2-3x_2^2)+(x_1^2+x_2^2)^2\geq 0\}$, a nonconvex region enclosed by a trefoil curve.
Since this curve is of genus zero, its moment vector $\y$ can be determined exactly. Again, we need to choose an appropriate frame $\Omega\supset K$. The results for $\Omega=B(0,1)$ and $d\in\{3,5,8,10\}$ are pictured in Figure \ref{trefoilFig}.
\end{ex}

\begin{figure}[ht]
\begin{center}
\fbox{\includegraphics[width=0.18\textwidth, height=0.1\textheight]{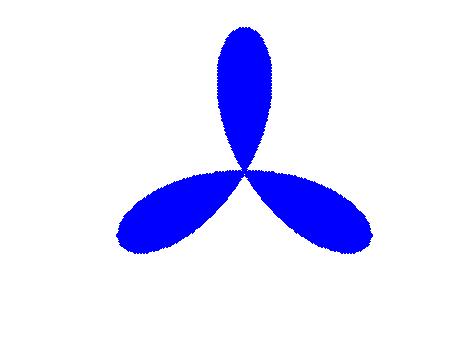}}
\fbox{\includegraphics[width=0.18\textwidth, height=0.1\textheight]{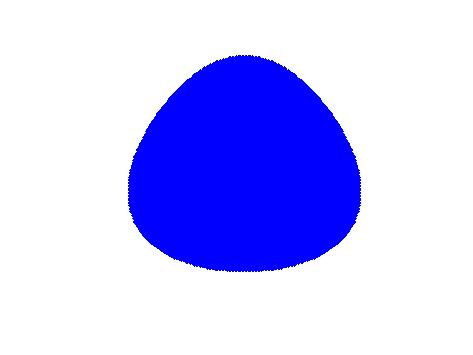}} 
\fbox{\includegraphics[width=0.18\textwidth, height=0.1\textheight]{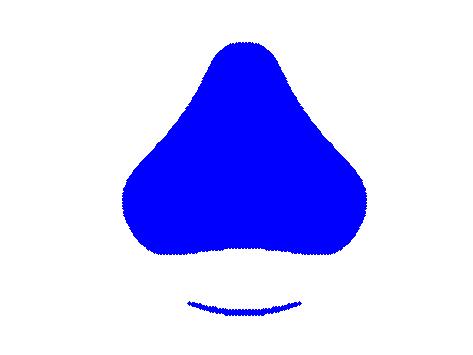}} 
\fbox{\includegraphics[width=0.18\textwidth, height=0.1\textheight]{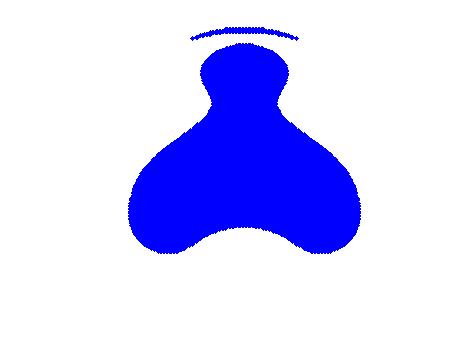}} 
\fbox{\includegraphics[width=0.18\textwidth, height=0.1\textheight]{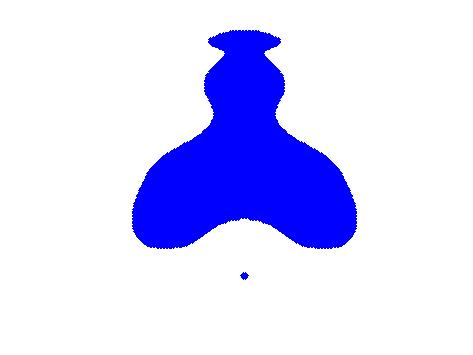}}
\caption{Trefoil region $K$ (left) and $\left((K_d\setminus K)\cup (K\setminus K_d) \right)$ for $d\in\{3,5,8,10\}$ (from left to right).\label{trefoilFig}}
\end{center}
\end{figure}

\subsection{Approximating solutions of differential equations}

Another important class of moment problems arises from the numerical analysis of ordinary and partial differential equations. Solutions of certain nonlinear differential equations can be understood as densities of measures associated with the particular differential equation. We obtain an approximate solution from the moment vector of this measure by solving an SDP problem. Approaches for deriving moment vectors associated with solutions of nonlinear differential equations have been introduced in \cite{mlhIfac, hlmOCMPDE} and are omitted here. We assume that the moment vector of a measure whose density is a solution of the respective differential equation is given in the following examples.\\

\begin{figure}[ht]
\begin{center}
\includegraphics[width=0.49\textwidth, height=0.2\textheight]{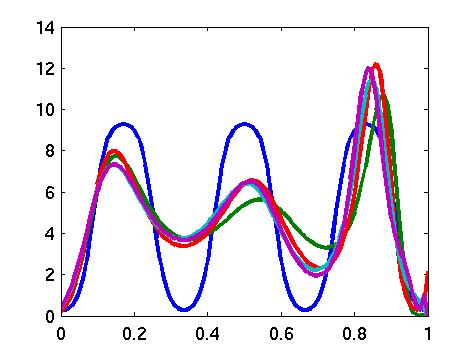}
\includegraphics[width=0.49\textwidth, height=0.2\textheight]{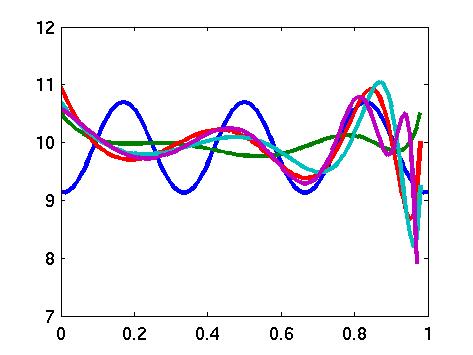}\\ 
\includegraphics[width=0.49\textwidth, height=0.2\textheight]{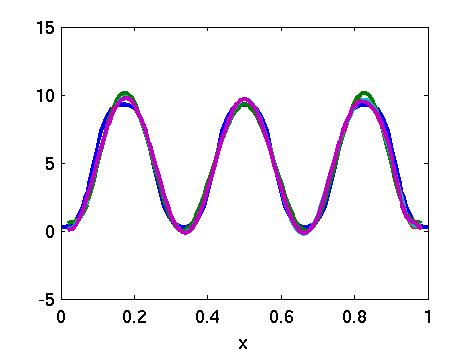}
\includegraphics[width=0.49\textwidth, height=0.2\textheight]{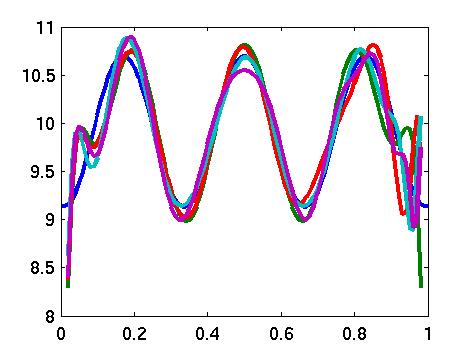} 
\caption{Estimates for $u$ (left) and $v$ (right) of the reaction-diffusion equation via maximum entropy estimation (above) and mean squared error minization (below).
Blue: $u$, green: $u^*_{10}$, red: $u^*_{20}$, cyan: $u^*_{30}$, magenta: $u^*_{50}$.\label{mimuraFig}}
\end{center}
\end{figure}

\begin{table}
\begin{center}
\begin{tabular}{|c|c|c|cc|cc|c|}
\hline $d$ & function & method & $\bar{\epsilon}_d(M)$ & $\hat{\epsilon}_d$ & $\bar{\epsilon}^o_d$ & $\hat{\epsilon}^o_d$ & time (sec.) \\
\hline 10 & $u$ & MEE & 2.0536 & 4.1063 & 2.1679 & 4.1063 & 38\\
30 & $u$ & MEE & 1.7066 & 3.4775 & 1.7974 & 3.4775 & 2 \\
50 & $u$ & MEE & 1.6978 & 3.3858 & 1.7792 & 3.3858 & 820 \\
\hline 10 & $u$ & $L^2$ & 0.4611 & 1.4506 & 0.4619 & 1.4506 & 0.1\\
30 & $u$ & $L^2$ & 0.3942 & 1.2431 & 0.4101 & 1.2431 & 0.1\\
50 & $u$ & $L^2$ & 0.3941 & 1.2173 & 0.4004 & 1.2173 & 0.1\\
\hline 10 & $v$ & MEE & 0.5782 & 2.0982 & 0.5205 & 1.0806 & 221\\
30 & $v$ & MEE & 0.5978 & 9.6060 & 0.4592 & 1.2356 & 645 \\
50 & $v$ & MEE & 0.6853 & 23.5993 & 0.4117 & 1.2973 & 3306 \\
\hline 10 & $v$ & $L^2$ & 0.3429 & 5.4767 & 0.1765 & 0.5617 & 0.1\\
30 & $v$ & $L^2$ & 0.3286 & 12.4501 & 0.1024 & 0.6253 & 0.1 \\
50 & $v$ & $L^2$ & 0.3744 & 14.2223 & 0.1454 & 0.5907 & 0.1 \\
\hline
\end{tabular}
\caption{Approximation accuracy of maximum entropy estimation (MEE) and mean squared error ($L^2$) optima for the reaction-diffusion equation.\label{mimuraResults}}
\end{center}
\end{table}

\begin{ex}
Given the moment vectors of a solution $(u,v)$ of the following reaction-diffusion equation \cite{mimura}:
\[
\begin{array}{ll}
\frac{1}{20} \; {u''}+ \frac{1}{9} \left( 35 + 16u - u^2 \right)\; u - u\, v = 0, \\
4 {v''} - \left( 1 + \frac{2}{5}v\right)\; v + u\, v = 0,\\
u'(0) = u'(5) = v'(0) = v'(5) = 0,\\
0\leq u,v  \leq 14 & \text{on }[0,5],
\end{array}
\]
we apply both maximum entropy estimation and mean squared error minimization to approximate the desired solution. For the numerical results, see Table \ref{mimuraResults} and Figure \ref{mimuraFig}. We observe that the mean squared error minimizers provide accurate pointwise approximations for $(u,v)$ on the entire domain, whereas the maximum entropy estimates provides a fairly accurate pointwise approximation on some segment of the domain only. Moreover, the mean squared error minimizers are obtained extremely fast as solutions of linear systems compared with the maximum entropy estimates. Thus, in this example, mean squared error minimization is clearly superior to maximum entropy when estimating densities from moments.
\end{ex}

\begin{table}
\begin{center}
\begin{tabular}{|c|c|c|c|c|}
\hline $d$ & method & $\bar{\epsilon}_d$ & $\bar{\epsilon}^o_d$ & time (sec.)\\
\hline 4 & MEE & 1.3e-2 & 1.1e-2 & 566\\
 6 & MEE & 9.7e-3 & 8.9e-3 & 2489 \\
\hline 4 & $L^2$ & 1.8e-3 & 1.4e-3 & 0.1\\
 6 & $L^2$ & 4.6e-4 & 2.8e-4 & 0.1\\
 10 & $L^2$ & 5.5e-4 & 2.1e-4 & 0.1\\
 12 & $L^2$ & 5.7e-4 & 2.1e-4 & 0.1\\
\hline
\end{tabular}
\caption{Approximation accuracy of maximum entropy estimation (MEE) and mean squared error ($L^2$) optima for the Allen-Cahn bifurcation PDE.\label{bifurResults}}
\end{center}
\end{table}

\begin{figure}[ht]
\begin{center}
\includegraphics[width=0.18\textwidth, height=0.2\textheight]{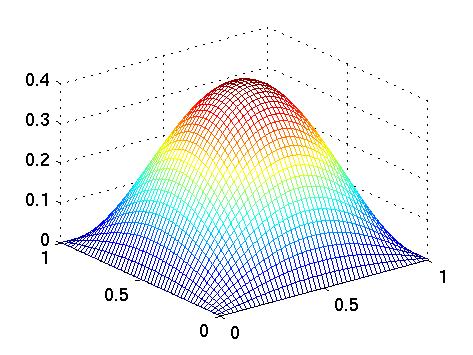}
\includegraphics[width=0.18\textwidth, height=0.2\textheight]{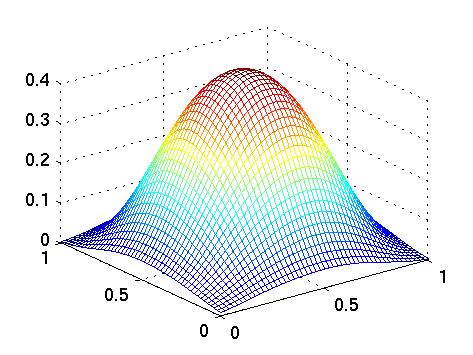}
\includegraphics[width=0.18\textwidth, height=0.2\textheight]{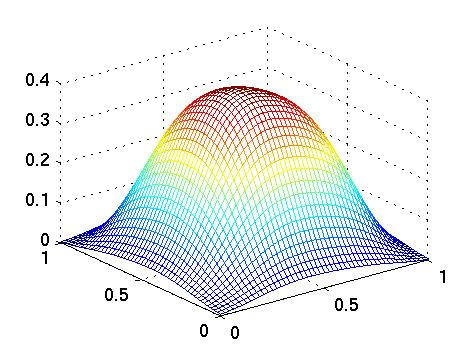}\\
\includegraphics[width=0.18\textwidth, height=0.2\textheight]{pdeBifurSol50}
\includegraphics[width=0.18\textwidth, height=0.2\textheight]{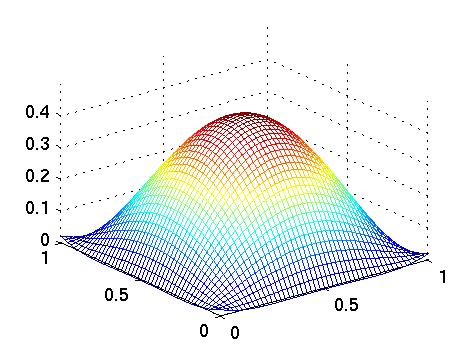}
\includegraphics[width=0.18\textwidth, height=0.2\textheight]{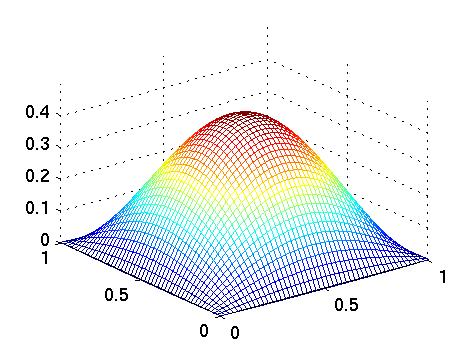}
\includegraphics[width=0.18\textwidth, height=0.2\textheight]{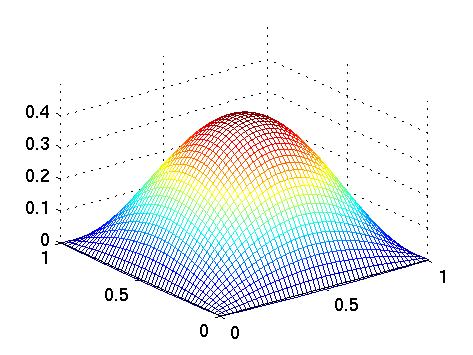}
\includegraphics[width=0.18\textwidth, height=0.2\textheight]{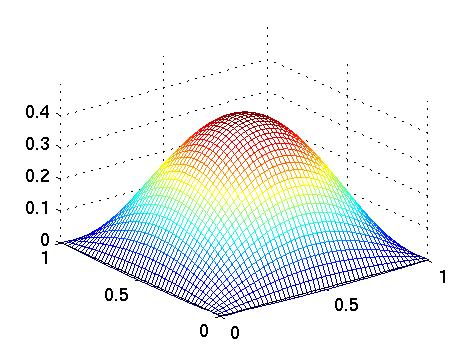}
\caption{Estimates for solution of the Allen-Cahn bifuraction PDE by maximum entropy estimation (above) and mean squared error minization (below). Exact solution (left) vs estimates for $d\in\{4,6\}$ and $d\in\{4,6,10,12\}$, respectively.\label{pdeBifurFig}}
\end{center}
\end{figure}

\begin{ex}
Given the moment vector of the nontrivial, positive solution of the Allen-Cahn bifurcation PDE:
\begin{equation}
\begin{array}{ll}
u_{xx} + u_{yy} + 22u(1-u^2) =0 & \text{on } [0,1]^2,\\
u = 0& \text{on } \partial [0,1]^2,\\
0\leq u\leq 1 & \text{on } [0,1]^2,
\end{array}
\label{pdeBifur}
\end{equation}
we apply both maximum entropy and mean squared error minization. The numerical results are reported in Table \ref{bifurResults}. The approximation accuracy for both methods is comparable, with mean squared error minimization being slightly more precise. However, applying maximum entropy estimation is limited for this problem as the cases $d>6$ are numerically too heavy to be solved in reasonable time. Mean squared error minimization yields increasingly better estimates for the desired solution within seconds, as pictured in Figure \ref{pdeBifurFig}. 
\end{ex}

\begin{table}
\begin{center}
\begin{tabular}{|c|c|c|c|c|}
\hline $d$ & method & $\bar{\epsilon}_d$ & $\bar{\epsilon}^o_d$ & time (sec.)\\
\hline 3 & MEE & 5.4e-2 & 5.1e-2 & 20\\
 6 & MEE & 1.9e-2 & 1.9e-2 & 2192\\
\hline 3 & $L^2$ & 2.8e-2 & 2.6e-2 & 0.1\\
 6 & $L^2$ & 2.8e-2 & 2.6e-2 & 0.1\\
 10 & $L^2$ & 1.4e-2 & 1.4e-2 & 0.2 \\
\hline
\end{tabular}
\caption{Approximation accuracy of maximum entropy estimation (MEE) and mean squared error ($L^2$) minization for the eikonal PDE.\label{eikonalResults}}
\end{center}
\end{table}

\begin{figure}[ht]
\begin{center}
\includegraphics[width=0.32\textwidth, height=0.2\textheight]{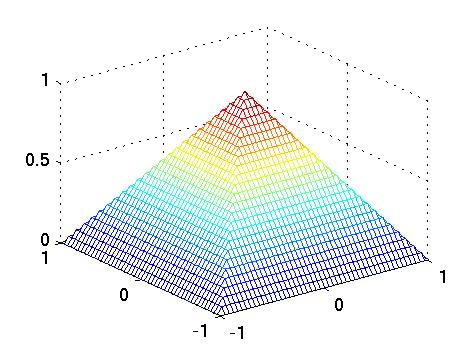}
\includegraphics[width=0.32\textwidth, height=0.2\textheight]{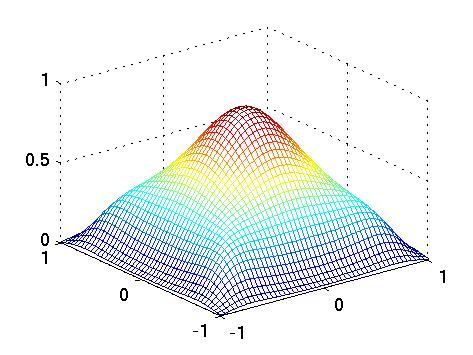}
\includegraphics[width=0.32\textwidth, height=0.2\textheight]{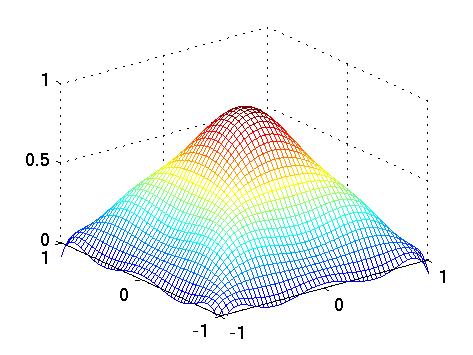}
\caption{Estimates for solution (left) of eikonal PDE by maximum entropy estimation (center) for $d=6$ and mean squared error minimization (right) for $d=10$.\label{eikonalFig}}
\end{center}
\end{figure}

\begin{ex}
Given the moment vector of the viscosity solution of the classical eikonal PDE:
\begin{equation}
\begin{array}{ll}
u_{x}^2 + u_{y}^2 -1 =0 & \text{on } [0,1]^2,\\
u = 0 & \text{on } \partial [0,1]^2,
\end{array}
\label{pdeEikonal}
\end{equation}
we apply both maximum entropy and mean squared error minization. Both methods provide better pointwise approximates for increasing degrees $d$, cf. Table \ref{eikonalResults} and Figure \ref{eikonalFig}. Again, the advantage of using mean squared error minimization is its negligible computation time.
\end{ex}

\subsection{Approximating indicator functions}
In all examples discussed so far, the polynomial approximates $u^*_d$ of solution $u$ obtained by solving the unconstrained problem (\ref{unconOpt}) have been nonnegative on $\Omega$. It was therefore not necessary to solve the constrained problems (\ref{noliftOpt}) or (\ref{putinarOpt}). 

\begin{table}
\begin{center}
\begin{tabular}{|c|c|c|c|c|}
\hline $d$ & problem & $\bar{\epsilon}_d$ & $\hat{\epsilon}_d$ & time (sec.) \\
\hline 10  & (\ref{unconOpt})& 0.08 & 0.50 & 0.1\\
 10 & (\ref{putinarOpt}) & 0.11 & 0.56 &  0.9 \\
\hline 50 & (\ref{unconOpt}) & 0.05 & 0.50 &  0.1\\
 50 & (\ref{putinarOpt}) &  0.08 & 0.54 &  2.7 \\
\hline 100 & (\ref{unconOpt}) & 0.05 & 0.50 &  0.3 \\
 100 & (\ref{putinarOpt}) & 0.07 & 0.55 & 37.5 \\
\hline
\end{tabular}
\caption{Mean squared error minimization for $u(x)=I_{[0.5, 1]}(x)$.\label{indFunPutinarRes}}
\end{center}
\end{table}

\begin{figure}[ht]
\begin{center}
\includegraphics[width=0.6\textwidth, height=0.35\textheight]{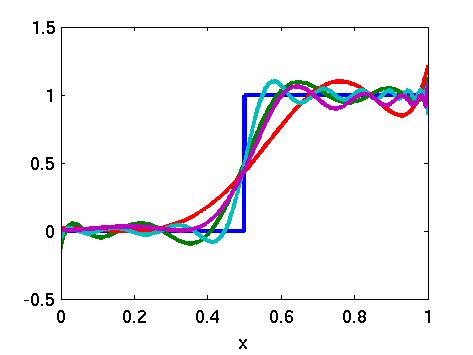}
\caption{Unconstrained and constrained mean squared error minimization estimates.
Blue: $u$, green: $u^*_{10}$ unconstrained, red: $u^*_{10}$ constrained, cyan: $u^*_{100}$ unconstrained,
magenta: $u^*_{100}$ constrained.\label{indFunPutinarFig}}
\end{center}
\end{figure}

\begin{ex}
\label{indFunPutinarEx}
As a first simple example where solving one of the constrained problems is required, consider $u:\, [0,1]\rightarrow\R,u(x)=I_{[0.5, 1]}(x)$ and its given vector of moments. We solve problems (\ref{unconOpt}) and (\ref{putinarOpt}) for $d\in\{10,50,100\}$. As illustrated in Figure \ref{indFunPutinarFig}, solving problem (\ref{putinarOpt}) provides globally nonnegative estimates for $u$. This comes at the price of decreasing approximation accuracy compared to solving the unconstrained problem, since the feasible set of problem (\ref{putinarOpt}) is a subset of the feasible set of (\ref{unconOpt}), cf. Table \ref{indFunPutinarRes}.
\end{ex}

\section{Orthogonal bases and the Gibbs effect}
\label{secGibbs}

Density estimation via $L^2$ norm minimization over the truncated space of polynomials is a special case of approximating a function belonging to a complicated functional space by a linear combination of basis elements of an easier, well-understood functional space. This general setting has been considered for both real and complex functional spaces. In the following we discuss the particular case when the basis of the easier functional space is orthogonal, and also effects resulting from truncating the infinite dimensional series expansion of the unknown function.

\subsection{The complex case}
\label{subsecComplex}

The classic, complex analogue of the real, inverse moment problem discussed in the previous sections is the problem of approximating a periodic function $u$ by a trigonometric polynomial. An orthonormal basis for the space of trigonometric polynomials is given by $(e^{-ikx})_{k \in {\mathbb Z}}$. It provides the {Fourier series} expansion:
\begin{equation*}
u(x) = \sum_{k\in{\mathbb Z}} y_k e^{ikx},
\end{equation*}
where $y_k=\frac{1}{2\pi}\int_{-\pi}^{\pi}u(x)e^{-ikx}dx$ in the univariate case, or
\begin{equation*}
u(x,y) = \sum_{k,l\in{\mathbb Z}} y_{k,l} e^{ikx}e^{ily},
\end{equation*}
where $y_{k,l}=\frac{1}{4\pi^2}\int_{-\pi}^{\pi}\int_{-\pi}^{\pi}u(x,y)e^{-ikx}e^{-ily}dx\, dy$ in the bivariate case. One obtains a trigonometric polynomial approximation for $u$ when truncating this series expansion at some $d\in\N$,
\begin{equation*}
u_d(x) = \sum_{k=-d}^d y_k e^{ikx}.
\end{equation*}
It is a well-known fact in Fourier theory that
\begin{equation*}
\int_0^{2\pi}(u-u_d)^2dx \rightarrow 0 \text{  for } d\rightarrow\infty.
\end{equation*}
The Fourier approximation for a periodic function is therefore the trigonometric analogue of the real polynomial approximation for a function obtained by mean squared error minimization in our approach. As in the real case we have almost uniform convergence. However, $u_d$ does not converge to $u$ uniformly if $u$ piecewise continuously differentiable with jump discountinuities, due to an effect known as the Gibbs phenomenon. It states that the truncated approximation shows a near constant overshoot and undershoot near a jump discontinuity. This overshoot does not vanish, it only moves closer to the jump for increasing $d$.

Since the truncated Fourier series is the trigonometric analogue of unconstrained mean squared norm minimizer of problem (\ref{unconOpt}), the question arises of how the trigonometric estimate behaves for a periodic functions when adding nonnegativity constraints as in problems (\ref{noliftOpt}) and (\ref{putinarOpt}) which aim at preventing the estimate from over- or undershooting near jump discontinuities.

In order to derive a tractable SDP problem, the nonnegativity constraints for the trigonometric polynomial need to be relaxed or tightened to LMI constraints. The difference with the real case will be the moment and localizing matrices being of Toeplitz type in contrast with the Hankel type matrices in the constraints of problem (\ref{noliftOpt}).

\subsection{The real case}

In our discussions in the previous sections we approximated an unknown density by a linear combination of elements of the monomial basis of the space of real polynomials. This choice of a basis for $\R [x]$ has several theoretical and practical shortcomings. For once, it is not an orthogonal basis with respect to the Lebesgue measure, and moreover the moment matrix $\M_d(\z)$ in problem (\ref{unconOpt}), whose inverse needs to be computed to determine $u^*_d$, is severely ill-conditioned \cite{bertero,teague}. As already pointed out in \cite{teague}, when choosing an orthogonal basis such as Legendre or Chebychev polynomials for $\R [x]$, the $L^2$ norm minimizer $u^*_d$ can be determined according to a closed-form formula. Thus, we do not even have to solve a linear system of equations.
This is essentially the real analogue of the closed form formula for the coefficients in the Fourier series approximation of periodic functions.

In \S \ref{subsecComplex} we discussed the Gibbs phenomenon. For reasons outlined there, we expect to observe this effect in the real case as well, when approximating an $L^2$ integrable function with jump discontinuities. In fact, the function from Example \ref{indFunPutinarEx} illustrates that. As shown in Figure \ref{indFunPutinarFig}, we observe an over- and undershoot on both sides of the discontinuity. The amplitude of this overshoot does not decrease for increasing $d$, but the overshoot moves closer to the jump. When adding the global nonnegativity constraint for the polynomial estimate, the undershoot at the left side disappears. However, it is compensated for by a weaker, overall pointwise approximation accuracy of the estimate. This observation is a first, partial answer to the question raised in the complex case.

\section{Conclusion}
\label{secConclusion}

We introduced an approach for estimating the density of a measure given a finite number of its moments only,
in the multivarite case and with no continuity assumption on the density. As an estimate we choose the polynomial minimizing the $L^2$ norm distance, or mean squared error to the unknown density. We have shown that this estimate is easy to determine by solving a linear system of equations. Moreover, it converges almost uniformly towards the desired density when the degree
increases. By minimizing the mean squared error subject to additional linear matrix inequality constraints, which translates to solving a semidefinite program, we obtain a density estimate guaranteed to be nonnegative on the support of the measure. Also in the constrained case, we have shown almost uniform convergence of the nonnegative mean squared error minimizer towards the unknown density, when the degree increases.
Mean squared error minimization is often superior to maximum entropy estimation in terms of approximation accuracy and computation time, as demonstrated for a number of examples. Moreover, it is able to handle general, basic, compact semialgebraic sets as support for the unknown measure, which present a challenge to maximum entropy estimation in higher dimension.


\end{document}